\documentclass[11pt]{article}
\usepackage{amsthm,amsfonts, amsbsy, amssymb,amsmath,graphicx}
\usepackage{graphics}
\usepackage[english]{babel}
\usepackage{graphicx}
\usepackage{lineno}
\usepackage{enumerate}
\usepackage{tikz}
\usepackage{tkz-graph}
\usepackage{tkz-berge}
\usepackage{hyperref}
\usepackage{color}
\usepackage{tikz}

\hypersetup{colorlinks=true}

\hypersetup{colorlinks=true, linkcolor=blue, citecolor=blue,urlcolor=blue}

\input epsf
\addtolength{\hoffset}{-1.7cm}
\addtolength{\textwidth}{3.5cm}
\addtolength{\voffset}{-1.7cm} \addtolength{\textheight}{2cm}

\usepackage{epic,latexsym,amssymb}
\usepackage{tikz}

\usepackage{tikz,epsf}

\usepackage{amsfonts}
\usepackage{amscd}
\usepackage{amsmath}
\usepackage{graphicx}
\usepackage{color}
\usepackage{caption,subcaption}

\newtheorem{theorem}{Theorem}
\newtheorem{observation}[theorem]{Observation}
\newtheorem{proposition}[theorem]{Proposition}

\newtheorem{corollary}[theorem]{Corollary}
\newtheorem{fact}[theorem]{Fact}

\tikzstyle{vertex}=[circle, draw, inner sep=0pt, minimum size=6pt]

\newcommand{\QEDmark}{\mbox{\textsc{qed}}}
\newcommand{\proofStarter}[1]{\textsc{#1} }

\begin{document}

\title{Outer independent double Roman domination number of graphs}
\author{{\small Doost Ali Mojdeh$^1$, Babak Samadi$^2$, Zehui Shao$^3$ and Ismael G. Yero$^4$}\\{\small Department of Mathematics, University of Mazandaran,}
\\{\small Babolsar, Iran$^{1,2}$}\\{\small damojdeh@umz.ac.ir$^1$}, {\small samadibabak62@gmail.com$^2$}\vspace{2mm}
\\{\small Institute of Computing Science and Technology, Guangzhou University, Guangzhou 510006, China$^3$}\\
{\small zshao@gzhu.edu.cn$^3$}\vspace{2mm}
\\{\small Departamento de Matem\'{a}ticas, Universidad de C\'{a}diz, Algeciras, Spain$^4$}\\
{\small ismael.gonzalez@uca.es$^4$}
}
\date{}
\maketitle

\begin{abstract}

A double Roman dominating function of a graph $G$ is a function $f:V(G)\rightarrow \{0,1,2,3\}$ having the property that for each vertex $v$ with $f(v)=0$, there exists $u\in N(v)$ with $f(u)=3$, or there are $u,w\in N(v)$ with $f(u)=f(w)=2$, and if $f(v)=1$, then $v$ is adjacent to a vertex assigned at least $2$ under $f$. The double Roman domination number $\gamma_{dR}(G)$ is the minimum weight $f(V(G))=\sum_{v\in V(G)}f(v)$ among all double Roman dominating functions of $G$. An outer independent double Roman dominating function is a double Roman dominating function $f$ for which the set of vertices assigned $0$ under $f$ is independent. The outer independent double Roman domination number $\gamma_{oidR}(G)$ is the minimum weight taken over all outer independent double Roman dominating functions of $G$.

In this work, we present some contributions to the study of outer independent double Roman domination in graphs. Characterizations of the families of all connected graphs with small outer independent double Roman domination numbers, and tight lower and upper bounds on this parameter are given. We moreover bound this parameter for a tree $T$ from below by two times the vertex cover number of $T$ plus one. We also prove that the decision problem associated with $\gamma_{oidR}(G)$ is NP-complete even when restricted to planar graphs with maximum degree at most four. Finally, we give an exact formula for this parameter concerning the corona graphs.
\end{abstract}

\textbf{2010 Mathematical Subject Classification:} 05C69

\textbf{Keywords}: (Outer independent) double Roman domination number; (outer independent) Roman domination number; independence number; vertex cover number; domination number; corona graphs.


\section{Introduction and preliminaries}

Throughout this paper, we consider $G$ as a finite simple graph with vertex set $V(G)$ and edge set $E(G)$. We use \cite{w} as a reference for terminology and notation which are not explicitly defined here. The {\em open neighborhood} of a vertex $v$ is denoted by $N(v)$, and its {\em closed neighborhood} is $N[v]=N(v)\cup \{v\}$. The {\em minimum} and {\em maximum degrees} of $G$ are denoted by $\delta(G)$ and $\Delta(G)$, respectively. The {\em corona} of two graphs $G_{1}$ and $G_{2}$ is the graph $G_{1}\odot G_{2}$ formed from one copy of $G_{1}$ and $|V(G_{1})|$ copies of $G_{2}$ where the $i$th vertex of $G_{1}$ is adjacent to every vertex in the $i$th copy of $G_{2}$. For a function $f:V(G)\rightarrow\{0,\cdots,k\}$ we let $V^{f}_{i}=\{v\in V(G)\mid f(v)=i\}$, for each $0\leq i\leq k$ (we simply write $V_{i}$ if there is no ambiguity with respect to the function $f$). We call $\omega(f)=f(V(G))=\sum_{v\in V(G)}f(v)$ as the {\em weight} of $f$.

A set $S\subseteq V(G)$ of $G$ is called a {\em dominating set} if every vertex not in $S$ has a neighbor in $S$. The {\em domination number} $\gamma(G)$ of $G$ is the minimum cardinality among all dominating sets of $G$. A subset $I\subseteq V(G)$ is said to be {\em independent} if no two vertices in $I$ are adjacent. The {\em independence number} $\alpha(G)$ is the maximum cardinality among all independent sets of $G$. A {\em vertex cover} of $G$ is a set $Q\subseteq V(G)$ that contains at least one endpoint of every edge. The {\em vertex cover number} $\beta(G)$ is the minimum cardinality among all vertex cover sets of $G$. For any parameter $p$ of $G$, by a $p(G)$-set we mean a set of cardinality $p(G)$.

A {\em Roman dominating function} of a graph $G$ is a function $f:V(G)\rightarrow\{0,1,2\}$ such that if $v\in V_0$ for some $v\in V(G)$, then there exists $w\in N(v)$ such that $w\in V_2$. The minimum weight of a Roman dominating function $f$ of $G$ is called the {\em Roman domination number} of $G$, denoted by $\gamma_{R}(G)$. This concept was formally defined by Cockayne \emph{et al.} \cite{cdhh} motivated, in some sense, by the article of Ian Stewart entitled ``Defend the Roman Empire!" (\cite{s}), published in {\it Scientific American}. The idea is that the values $1$ and $2$ represent the number of Roman legions stationed at a location $v$. A location $u\in N(v)$ is considered to be {\em unsecured} if no legion is stationed there ($f(u)=0$). The unsecured location $u$ can be secured by sending a legion to $u$ from an adjacent location $v$. But a legion cannot be sent from a location $v$ if doing so leaves that location unsecured (if $f(v)=1$). Thus, two legions must be stationed at a location ($f(v)=2$) before one of the legions can be sent to an adjacent location.

Once the seminal paper \cite{cdhh} was published, this topic attracted the attention of many researchers. One of the research lines that has recently become popular concerns variations in the concept of Roman domination involving some vertex independence features. Some of these variations have been outlined in \cite{cky}.

For instance, an {\em outer independent Roman dominating function} (OIRD function) of a graph $G$ is a Roman dominating function $f:V(G)\rightarrow\{0,1,2\}$ for which $V^{f}_{0}$ is independent. The {\em outer independent Roman domination number} (OIRD number) $\gamma_{oiR}(G)$ is the minimum weight of an OIRD function of $G$. This parameter was introduced in \cite{acs1}. A total domination version of such parameter above was presented in \cite{cky}.

On the other hand, Beeler \emph{et al}. \cite{bhh} introduced the concept of double Roman domination. This provided a stronger and more flexible level of defense in which three legions can be deployed at a given location. They also presented some real privileges of this concept in comparison with the Roman domination. But existing two adjacent locations with no legions can jeopardize them. Indeed, they would be considered more vulnerable. So, one improved situation for a location with no legion is to be surrounded by locations in which legions are stationed. This motivates us to consider a double Roman dominating function $f$ for which $V^{f}_{0}$ is an independent sets, which is the concept that will be investigated in this paper. More formally, a {\em double Roman dominating function} (DRD function for short) of a graph $G$ is a function $ f:V(G)\rightarrow \{0,1,2,3\}$ for which the following conditions are satisfied.
\begin{itemize}
  \item[(a)] If $f(v)=0$, then the vertex $v$ must have at least two neighbors in $V_2$ or one neighbor in $V_3$.
  \item[(b)] If $f(v)=1$ , then the vertex $v$ must have at least one neighbor in $V_2\cup V_3$.
\end{itemize}
This parameter was also studied in \cite{al}, \cite{jr} and \cite{zljs}. Accordingly, an {\em outer independent double Roman dominating function} (OIDRD function for short) is a DRD function for which $V^{f}_{0}$ is independent. The {\em \emph{(}outer independent\emph{)} double Roman domination number} ($\gamma_{oidR}(G)$) $\gamma_{dR}(G)$ equals the minimum weight of (an) a (OIDRD function) DRD function of $G$. This concept was first introduced in \cite{acss}.

For the sake of convenience, an OIDRD function (OIRD function) $f$ of a graph $G$ with weight $\gamma_{oidR}(G)$ ($\gamma_{oiR}(G)$) is called a $\gamma_{oidR}(G)$-function ($\gamma_{oiR}(G)$-function).

In this paper, we characterize the families of all connected graphs $G$ with small OIDRD numbers (that is $\gamma_{oidR}(G)\in\{3,4,5\}$), and give tight lower and upper bounds on this parameter in terms of several well-known graph parameters. We also prove that the decision problem associated with $\gamma_{oidR}(G)$ is NP-complete for planar graphs with maximum degree at most four.

We begin with some easily verified facts about the OIDRD numbers of some basic families of graphs.

\begin{observation}\label{ob1}The following statements hold.
\begin{itemize}
  \item[{\rm (i)}] For $n\geq1$,
$\gamma_{oidR}(P_n)=\left\{
\begin{array}
[l]{ll}%
n,& \text{if }\ n=3,\\
n+1, & \text{if }\ n\neq3.
\end{array}(\emph{\cite{acss}})
\right.$
  \item[{\rm (ii)}] For $n\geq3$,
$\gamma_{oidR}(C_n)=\left\{
\begin{array}
[l]{ll}%
n, & \text{if }\ n\ \text{is}\  \text{even},\\
n+1, & \text{if }\ n\ \text{is}\  \text{odd}.
\end{array}(\emph{\cite{acss}})
\right.$
  \item[{\rm (iii)}] For $n\ge 1$, $\gamma_{oidR}(K_n)=n+1$. \emph{(\cite{acss})}
  \item[{\rm (iv)}] For positive integers $m\le n$,
$\gamma_{oidR}(K_{m,n})=\left\{
\begin{array}
[l]{lll}%
3, & \text{if }\ m=1,\\
2m, & \text{if }\ m\in\{2,3\},\\
m+4, &  \text{otherwise}.
\end{array}
\right.$
  \item[{\rm (v)}] For a complete $k$-partite graph $K_{n_1,n_2,...,n_k}$ with $k\geq3$ and $1\le n_1\le n_2\le \ldots\le n_k$, $\gamma_{oidR}(K_{n_1,n_2,...,n_k})=\sum_{i=1}^{k-1}n_i+2$.
\end{itemize}
\end{observation}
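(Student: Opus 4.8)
The plan is to treat parts (i)--(iv) as routine: (i)--(iii) are quoted from \cite{acss}, and (iv) follows from the same sort of small case analysis as (v), using that a star has a universal vertex while for $m\ge 2$ the only maximal independent sets of $K_{m,n}$ are its two parts. The substance is part (v), so I focus on it. Write $G=K_{n_1,\ldots,n_k}$ with partite sets $V_1,\ldots,V_k$, $|V_i|=n_i$, and set $n=|V(G)|=\sum_{i=1}^{k}n_i$, so the claimed value is $n-n_k+2$. The one structural fact used throughout is that every independent set of $G$ lies inside a single partite set $V_i$.

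For the upper bound I exhibit an OIDRD function of weight $n-n_k+2$. Using $k\ge 3$, pick $u\in V_1$ and $v\in V_2$ (distinct, in distinct parts, and not in $V_k$). Define $f\equiv 0$ on $V_k$, $f(u)=f(v)=2$, and $f\equiv 1$ elsewhere; note $n-n_k-2=\sum_{i=1}^{k-1}n_i-2\ge 0$. Then $V^{f}_{0}=V_k$ is independent; every vertex of $V_k$ has the two neighbors $u,v$ of value $2$, so (a) holds; and a vertex $w$ with $f(w)=1$ lies in some $V_i$ with $i\le k-1$, so it is adjacent to $u$ (if $i\neq 1$) or to $v$ (if $i=1$), giving (b). Hence $\gamma_{oidR}(G)\le\omega(f)=2+2+(n-n_k-2)=n-n_k+2$.

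For the lower bound, let $f$ be a $\gamma_{oidR}(G)$-function. If $V^{f}_{0}=\emptyset$, then $\omega(f)\ge n\ge n-n_k+2$ whenever $n_k\ge2$; and when $n_k=1$ the graph is $K_n$, so the bound is exactly $\gamma_{oidR}(K_n)=n+1$ by (iii) (directly: not all values can be $1$, else (b) fails, so $\omega(f)\ge n+1$). Otherwise $V^{f}_{0}$ is a nonempty independent set, hence $V^{f}_{0}\subseteq V_j$ for some $j$; put $a=|V^{f}_{0}|$, so $1\le a\le n_j\le n_k$. Let $R=V(G)\setminus V_j$, so $|R|=n-n_j$ and every vertex of $R$ has value $\ge1$. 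Fixing any $x\in V^{f}_{0}$, condition (a) gives $x$ a neighbor of value $3$ or two neighbors of value $2$, all lying in $R$; charging the remaining vertices of $R$ their value $\ge1$ yields $\sum_{y\in R}f(y)\ge|R|+2=n-n_j+2$. Therefore
\[
\omega(f)=\sum_{y\in V_j\setminus V^{f}_{0}}f(y)+\sum_{y\in R}f(y)\ \ge\ (n_j-a)+(n-n_j+2)=n-a+2\ \ge\ n-n_k+2,
\]
since $a\le n_k$. Combining the bounds gives $\gamma_{oidR}(G)=n-n_k+2=\sum_{i=1}^{k-1}n_i+2$.

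The only real subtlety is the bookkeeping in the lower bound: one must verify that the ``$+2$'' surplus forced by condition (a) is genuinely additional to the value $1$ already charged to each vertex of $R$ (hence the separate treatment of the $V_3^f$ and $V_2^f$ subcases), and one must dispose separately of the degenerate case $V^{f}_{0}=\emptyset$ (equivalently $n_k=1$), where the surplus comes from condition (b) rather than (a). Beyond that, everything reduces to the observation that independent sets in a complete multipartite graph are confined to a single part.
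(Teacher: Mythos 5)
The paper offers no proof of this observation at all: items (i)--(iii) are simply cited from the reference, and (iv) and (v) are presented as ``easily verified facts,'' so there is no argument of the authors' to compare yours against. Your proof of part (v) is correct and complete. The upper bound construction (zero on $V_k$, two $2$'s placed in two \emph{distinct} smaller parts, $1$'s elsewhere) correctly exploits $k\ge 3$ so that condition (b) is satisfied for every vertex labelled $1$, and the lower bound is sound: confining $V^f_0$ to a single part $V_j$, charging each vertex of $V(G)\setminus V_j$ at least $1$, and extracting the additional $+2$ surplus from condition (a) at a single vertex of $V^f_0$ gives $\omega(f)\ge n-a+2\ge n-n_k+2$, with the degenerate case $V^f_0=\emptyset$ (and in particular $K_n$) handled separately via condition (b). The only soft spot is part (iv), which you dispose of as ``the same sort of small case analysis as (v)''; this undersells the one genuine difference in the bipartite case, namely that a vertex labelled $1$ in the smaller part of $K_{m,n}$ has \emph{all} its neighbours in the larger part, which is precisely why the answer jumps to $m+4$ for $m\ge 4$ instead of the $m+2$ one would naively get by imitating your construction for (v). Since the paper itself asserts (iv) without proof, this is a forgivable omission, but a complete write-up should spell out that extra $2$ placed in the larger part and the threshold at $m=4$ where $2m$ and $m+4$ cross.
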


While considering a DRD function $f=(V_0,V_1,V_2,V_3)$ one can assume that $V_1=\emptyset$ (see \cite{bhh}). In contrast, OIDRD function behave a little different. For instance, if $G=K_{m,n}$ with $5\le m \le n$, then $\gamma_{oidR}(K_{m,n})=m+4$ and all vertices of the smaller partite set have positive values. This shows that some vertices of the smaller partite set are assigned inevitably the value $1$. That is stated in the following observation.

\begin{fact}
Let $f=(V_0,V_1,V_2,V_3)$ be an OIDRD function of a graph $G$. Then, $V_1$ is not necessarily empty.
\end{fact}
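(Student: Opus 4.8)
The plan is to prove the statement by exhibiting an explicit family of graphs in which \emph{every} minimum-weight OIDRD function is forced to use the value $1$; this shows that, in contrast to the situation for DRD functions, one cannot in general assume $V_1=\emptyset$ without increasing the weight. The family I would use is $G=K_{m,n}$ with $5\le m\le n$, writing $A$ and $B$ for the partite sets with $|A|=m\le n=|B|$. By Observation~\ref{ob1}(iv) we have $\gamma_{oidR}(K_{m,n})=m+4$, and this value is realized by the OIDRD function that assigns $3$ to one vertex of $A$, $2$ to one vertex of $B$, $1$ to the remaining $m-1$ vertices of $A$, and $0$ to the remaining $n-1$ vertices of $B$; note that this optimal function does use the value $1$, and its set of $0$-vertices (a subset of $B$) is independent.

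The key step is to show that no OIDRD function of $K_{m,n}$ of weight $m+4$ can have $V_1=\emptyset$. Suppose $f=(V_0,V_1,V_2,V_3)$ is an OIDRD function of $K_{m,n}$ with $V_1=\emptyset$, so every vertex is assigned a value in $\{0,2,3\}$. Since $V_0$ is independent and every edge of $K_{m,n}$ joins a vertex of $A$ to a vertex of $B$, the set $V_0$ is entirely contained in $A$ or entirely contained in $B$. If $V_0\subseteq A$, then every vertex of $B$ has value at least $2$, whence $\omega(f)\ge 2|B|=2n\ge 2m$; if $V_0\subseteq B$, then every vertex of $A$ has value at least $2$, whence $\omega(f)\ge 2|A|=2m$. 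In both cases $\omega(f)\ge 2m$, and since $m\ge 5$ this gives $\omega(f)\ge 2m\ge m+5>m+4=\gamma_{oidR}(K_{m,n})$. Hence every $\gamma_{oidR}(K_{m,n})$-function satisfies $V_1\neq\emptyset$, which establishes the claim.

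I do not expect a genuine obstacle here: the argument reduces to the observation that the independence of $V_0$ confines it to a single part of the bipartition, combined with the elementary inequality $2m>m+4$, which is precisely where the hypothesis $m\ge 5$ is used (for $m\le 4$ the behaviour differs, consistent with the separate values $2m$ and $3$ recorded in Observation~\ref{ob1}(iv)). If one prefers an even smaller witness, the same reasoning applies verbatim to the complete graph $K_n$ with $n\ge 4$, using $\gamma_{oidR}(K_n)=n+1$ from Observation~\ref{ob1}(iii) together with the fact that an OIDRD function of $K_n$ with $V_1=\emptyset$ has at most one vertex of value $0$, hence at least $n-1$ vertices of value at least $2$, so weight at least $2(n-1)>n+1$.
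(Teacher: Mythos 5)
Your proof is correct and takes essentially the same route as the paper, which justifies this fact by the very same witness $K_{m,n}$ with $5\le m\le n$ and the value $\gamma_{oidR}(K_{m,n})=m+4$ from Observation~\ref{ob1}(iv); you merely supply the details the paper leaves implicit, namely that independence of $V_0$ confines it to one partite set, so any OIDRD function with $V_1=\emptyset$ has weight at least $2m>m+4$. Your alternative witness $K_n$ ($n\ge 4$) is also valid and slightly simpler.
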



\section{Connected graphs with small OIDRD numbers}

In this section, we characterize the family of all connected graphs $G$ for which $\gamma_{oidR}(G)\in \{3,4,5\}$. To this end, let $\mathcal{G}$ be the family of all graphs of the form $G_1$, $G_2$ and $G_3$ depicted in Figure \ref{fig:g1-g2-g3}. In the figure, the number of vertices $w_1,\cdots w_k$ in $G_1$, $G_2$, and $G_3$ is at least $1$, $1$ and $2$, respectively.

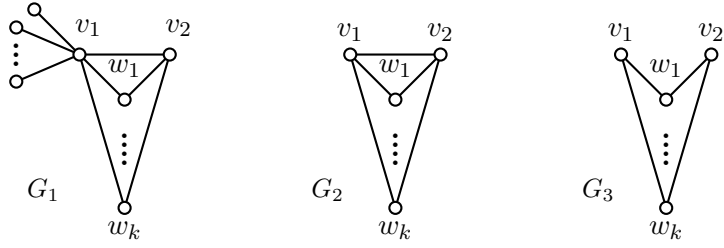
\begin{figure}[h]
\tikzstyle{every node}=[circle, draw, fill=white!, inner sep=0pt,minimum width=.16cm]
\begin{center}
\begin{tikzpicture}[thick,scale=.6]
  \draw(0,0) { 


+(-12,0) node{}

    +(-10,0) node{}
    +(-11,-1) node{}

    +(-11,-3.4) node{}
    +(-12,0) -- +(-10,0)
    +(-13,1) node{}
    +(-13.4,.6) node{}
    +(-13.4,-.6) node{}
    +(-12,0) -- +(-13,1)
    +(-12,0) -- +(-13.4,.6)
    +(-12,0) -- +(-13.4,-0.6)

+(-11,-1) -- +(-12,0)
+(-11,-1) -- +(-10,0)
 +(-11,-3.4) -- +(-12,0)
  +(-11,-3.4) -- +(-10,0)

+(-11,-1.8) node[rectangle, draw=white!0, fill=white!100]{ ${\textbf{.}}$}
+(-11,-2) node[rectangle, draw=white!0, fill=white!100]{ ${\textbf{.}}$}
+(-11,-2.2) node[rectangle, draw=white!0, fill=white!100]{ ${\textbf{.}}$}
+(-11,-2.4) node[rectangle, draw=white!0, fill=white!100]{ ${\textbf{.}}$}
+(-11.8,.6) node[rectangle, draw=white!0, fill=white!100]{ ${v_1}$}
+(-9.8,.6) node[rectangle, draw=white!0, fill=white!100]{ ${v_2}$}

+(-13.4,.2) node[rectangle, draw=white!0, fill=white!100]{ ${\textbf{.}}$}
+(-13.4,0) node[rectangle, draw=white!0, fill=white!100]{ ${\textbf{.}}$}
+(-13.4,-.2) node[rectangle, draw=white!0, fill=white!100]{ ${\textbf{.}}$}


+(-11,-.3) node[rectangle, draw=white!0, fill=white!100]{${w_1}$}
+(-11,-3.9) node[rectangle, draw=white!0, fill=white!100]{ ${w_k}$}

+((-12.8,-3) node[rectangle, draw=white!0, fill=white!100]{{\small $G_1$}}



   +(-6,0) node{}

    +(-4,0) node{}
    +(-5,-1) node{}

    +(-5,-3.4) node{}
    +(-6,0) -- +(-4,0)

+(-5,-1) -- +(-6,0)
+(-5,-1) -- +(-4,0)
 +(-5,-3.4) -- +(-6,0)
  +(-5,-3.4) -- +(-4,0)

+(-5,-1.8) node[rectangle, draw=white!0, fill=white!100]{ ${\textbf{.}}$}
+(-5,-2) node[rectangle, draw=white!0, fill=white!100]{ ${\textbf{.}}$}
+(-5,-2.2) node[rectangle, draw=white!0, fill=white!100]{ ${\textbf{.}}$}
+(-5,-2.4) node[rectangle, draw=white!0, fill=white!100]{ ${\textbf{.}}$}
+(-6,.5) node[rectangle, draw=white!0, fill=white!100]{ ${v_1}$}
+(-4,.5) node[rectangle, draw=white!0, fill=white!100]{ ${v_2}$}

+(-5,-.3) node[rectangle, draw=white!0, fill=white!100]{${w_1}$}
+(-5,-3.9) node[rectangle, draw=white!0, fill=white!100]{ ${w_k}$}

+((-6.5,-3) node[rectangle, draw=white!0, fill=white!100]{{\small $G_2$}}



 +(0,0) node{}

    +(2,0) node{}
    +(1,-1) node{}

    +(1,-3.4) node{}

+(1,-1) -- +(0,0)
+(1,-1) -- +(2,0)
 +(1,-3.4) -- +(0,0)
  +(1,-3.4) -- +(2,0)

+(1,-1.8) node[rectangle, draw=white!0, fill=white!100]{ ${\textbf{.}}$}
+(1,-2) node[rectangle, draw=white!0, fill=white!100]{ ${\textbf{.}}$}
+(1,-2.2) node[rectangle, draw=white!0, fill=white!100]{ ${\textbf{.}}$}
+(1,-2.4) node[rectangle, draw=white!0, fill=white!100]{ ${\textbf{.}}$}
+(0,.5) node[rectangle, draw=white!0, fill=white!100]{ ${v_1}$}
+(2,.5) node[rectangle, draw=white!0, fill=white!100]{ ${v_2}$}

+(1,-.3) node[rectangle, draw=white!0, fill=white!100]{ ${w_1}$}
+(1,-3.9) node[rectangle, draw=white!0, fill=white!100]{ ${w_k}$}

+((-0.5,-3) node[rectangle, draw=white!0, fill=white!100]{{\small $G_3$}}

 };
\end{tikzpicture}
\end{center}
\caption{The graphs $G_1$, $G_2$ and $G_3$.}\label{fig:g1-g2-g3}
\end{figure}
We next define other six necessary families of graphs, that is, the families $\mathcal{H}_{i}$, $1\leq i\leq 6$. To this end, we shall use the following conventions. For a given set of vertices $\{v_1,\dots,v_r\}$ with $r\ge 1$, by $V_{v_1,\dots,v_r}$ we represent another disjoint set of vertices such that every vertex $v\in V_{v_1,\dots,v_r}$ satisfies $N(v)=\{v_1,\dots,v_r\}$. Such convention shall be used also while proving Proposition \ref{prop1}.
\begin{itemize}
  \item $\mathcal{H}_{1}$: We begin with a path $P=abc$. Then we add four sets $V_{b}$, $V_{a,b}$, $V_{b,c}$ and $V_{a,b,c}$ such that one of the following conditions holds.

      - ($a_{1}$): $V_{a,b},V_{b,c}=\emptyset$ and $|V_{a,b,c}|\geq 2$.

      - ($b_{1}$) only one of the sets $V_{a,b}$ and $V_{b,c}$ is empty, and $V_{a,b,c}\neq \emptyset$.

      - ($c_{1}$) $V_{a,b},V_{b,c}\neq \emptyset$.
  \item $\mathcal{H}_{2}$: We begin with a cycle of order three $C=abca$ and proceed as above, by adding the sets $V_{b}$, $V_{a,b}$, $V_{b,c}$ and $V_{a,b,c}$. Then, one of the following situations holds.

   - ($a_{2}$) $V_{a,b,c}\neq \emptyset$.

   - ($b_{2}$) $V_{a,b},V_{b,c}\neq \emptyset$.
  \item $\mathcal{H}_{3}$: We begin with two nonadjacent vertices $a$ and $b$, and add the non-empty sets of vertices $V_{a}$ and $V_{a,b}$.
  \item $\mathcal{H}_{4}$: We begin with a vertex $a$ and an edge $bc$. Then we add the sets $V_{a,b}$ and $V_{a,b,c}$ such that one of the following conditions holds.

      - ($a_{4}$) $V_{a,b}=\emptyset$ and $|V_{a,b,c}|\geq2$.

      - ($b_{4}$) $V_{a,b}\neq \emptyset$.
  \item $\mathcal{H}_{5}$: We begin with a path $P=abc$. Then we add the sets $V_{a,b}$ and $V_{a,b,c}$ such that one of the following conditions holds.

      - ($a_{5}$) $V_{a,b},V_{a,b,c}\neq \emptyset$.

      - ($b_{5}$) $V_{a,b}=\emptyset$ and $|V_{a,b,c}|\geq2$.
  \item $\mathcal{H}_{6}$: We begin with a path $P=abc$. Then we add the sets $V_{a,c}$ (note that for any vertex $v\in V_{a,c}$, it happens $N(v)=\{a,c\}$) and $V_{a,b,c}$, such that one of the next conditions holds.

  - ($a_{6}$) $V_{a,c},V_{a,b,c}\neq \emptyset$.

  - ($b_{6}$) $V_{a,c}=\emptyset$ and $|V_{a,b,c}|\geq2$.
\end{itemize}

\begin{proposition}\label{prop1}
Let $G$ be a connected graph of order $n\ge 3$. Then,
\begin{itemize}
  \item[{\rm (i)}] $\gamma_{oidR}(G)=3$  if and only if $G$ is a star.
  \item[{\rm (ii)}] $\gamma_{oidR}(G)=4$ if and only if $G \in \mathcal{G}$.
  \item[{\rm (iii)}] $\gamma_{oidR}(G)=5$ if and only if $G\in \cup_{i=1}^{6}\mathcal{H}_{i}$.
\end{itemize}
\end{proposition}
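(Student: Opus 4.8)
The plan is to prove each of the three equivalences by the same two-sided scheme. For the ``if'' direction, exhibit an explicit OIDRD function of the prescribed weight on every member of the stated family, and bound $\gamma_{oidR}$ from below using the already-established lower items. For the ``only if'' direction, start from an optimal OIDRD function $f=(V_0,V_1,V_2,V_3)$ of the prescribed weight, read off the structure of $G$ from conditions (a)--(b), and use items (i)--(ii) to eliminate the configurations that would push $\gamma_{oidR}(G)$ below the claimed value. Item (i) is the base case: for connected $G$ with $n\ge 3$ we have $\gamma_{oidR}(G)\ge 3$, since a function of weight at most $2$ has $|V_2\cup V_3|\le 1$ and then some vertex of the nonempty set $V_0$ fails (a), while $V_1\ne\emptyset$ is excluded by (b). If $\gamma_{oidR}(G)=3$, an optimal $f$ has profile $(|V_3|,|V_2|,|V_1|)=(1,0,0)$ --- profile $(0,1,1)$ violates (a) on any vertex of $V_0$, and $(0,0,3)$ violates (b) --- so the unique vertex $u$ with $f(u)=3$ dominates $V(G)$ by (a), and since $V_0=V(G)\setminus\{u\}$ is independent, $G$ is a star; conversely, a star carries the OIDRD function assigning $3$ to its centre.

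The key tool for items (ii)--(iii) is the estimate $\gamma_{oidR}(G)\le 3+\beta(G-v)$ valid for every dominating vertex $v$: assign $3$ to $v$, $1$ to a minimum vertex cover $Q$ of $G-v$, and $0$ elsewhere, which is OIDRD because $V(G-v)\setminus Q$ is independent in $G$ and every vertex of value $0$ or $1$ is a neighbour of $v$. In particular, a dominating vertex $v$ with $\beta(G-v)\le 1$ forces $\gamma_{oidR}(G)\le 4$. For ``if'' in item (ii) one checks the weight-$4$ functions $f(v_1)=3,\,f(v_2)=1$ on $G_1,G_2$ and $f(v_1)=f(v_2)=2$ on $G_3$, none of which is a star. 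For ``only if'', an optimal $f$ of weight $4$ has profile $(1,0,1)$, $(0,2,0)$, $(0,1,2)$ or $(0,0,4)$: the last fails (b); profile $(0,1,2)$ forces $V_0=\emptyset$, hence $n=3$ and $G\in\{P_3,K_3\}$, with $P_3$ excluded since $\gamma_{oidR}(P_3)=3$; profile $(0,2,0)$ forces every vertex of $V_0$ to be adjacent to both vertices of $V_2$ with $V_0$ independent, yielding $G_2$ or $G_3$; and in profile $(1,0,1)$ the value-$3$ vertex $u$ dominates $V(G)$, the value-$1$ vertex $w$ is adjacent to $u$, and, as $G$ is not a star, $w$ has a neighbour in $V_0$, giving $G_2$ or $G_1$ according to whether $N(w)\cap V_0$ equals $V_0$ or not.

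Item (iii) follows the same route with more cases. For ``if'', verify a weight-$5$ function on each family: $f(b)=3,\,f(a)=f(c)=1$ when there is a central dominating vertex $b$ (families $\mathcal{H}_1,\mathcal{H}_2,\mathcal{H}_5$, and $\mathcal{H}_6$ under $(b_6)$), $f(a)=3,\,f(b)=2$ in $\mathcal{H}_3$, $f(a)=f(b)=2,\,f(c)=1$ in $\mathcal{H}_4$, and $f(a)=f(c)=2,\,f(b)=1$ in $\mathcal{H}_6$ under $(a_6)$; then, using items (i)--(ii), check that each such $G$ is neither a star nor a member of $\mathcal{G}$ --- for the first group every dominating vertex $v$ satisfies $\beta(G-v)=2$ (so $G\notin\{G_1,G_2,G_3\}$), while for the second group $G$ has no dominating vertex and is not complete bipartite. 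For ``only if'', an optimal $f$ of weight $5$ has one of the profiles $(1,1,0),(1,0,2),(0,2,1),(0,1,3),(0,0,5)$; the last fails (b), and profile $(0,1,3)$ forces $V_0=\emptyset$, $n=4$ and $G$ a spanning supergraph of $K_{1,3}$, where the estimate leaves only $G=K_4\in\mathcal{H}_2$. For the remaining profiles I split on whether $G$ has a dominating vertex. If it does, one first argues --- by inspecting the profile of $f$, noting that the positive-valued vertices other than a dominating vertex $b$ form a vertex cover of $G-b$, and invoking the estimate --- that some dominating vertex $b$ has $\beta(G-b)=2$; then $G-b$ has a $2$-element vertex cover $\{a,c\}$, the set $I:=V(G)\setminus\{a,b,c\}$ is independent, and sorting the vertices of $I$ by their neighbourhoods inside $\{a,c\}$, combined with the alternative $a\sim c$ versus $a\not\sim c$, places $G$ in $\mathcal{H}_2$ (if $a\sim c$) or in $\mathcal{H}_1$/$\mathcal{H}_5$ (if $a\not\sim c$), the sub-conditions $(a_i),(b_i),(c_i)$ being exactly the statement ``$\beta(G-b)=2$''. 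If $G$ has no dominating vertex, then profile $(1,0,2)$ is impossible (the value-$3$ vertex would dominate) and so are the sub-cases of $(1,1,0)$ and $(0,2,1)$ in which two vertices of value at least $2$ are adjacent (one of them would dominate); thus the profile is $(1,1,0)$ with non-adjacent value-$3$ and value-$2$ vertices, forcing the neighbourhood of the value-$2$ vertex into that of the value-$3$ vertex and hence $G\in\mathcal{H}_3$, or $(0,2,1)$ with non-adjacent value-$2$ vertices, giving $G\in\mathcal{H}_4$ or $G\in\mathcal{H}_6$ under $(a_6)$ according to whether the value-$1$ vertex is adjacent to one or to both value-$2$ vertices.

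The hard part is the ``only if'' direction of item (iii). Once the profile of $f$ is fixed, one must still check that the adjacencies forced by (a)--(b) are exactly those of the matching family, that the partition of $I$ (or of $V_0$) induced by $f$ coincides with $V_b,V_{a,b},V_{b,c},V_{a,b,c}$ (respectively with $V_a$ or $V_{a,c}$ together with $V_{a,b,c}$), and that the sub-conditions $(a_i),(b_i),(c_i)$ translate precisely the requirement that $G$ be neither a star nor a member of $\mathcal{G}$ (equivalently, $\beta(G-b)=2$ in the dominating-vertex case, and the analogous minimality condition otherwise). This bookkeeping is delicate because the families are not pairwise disjoint on their boundaries --- for instance the graphs of $\mathcal{H}_5$ and those of $\mathcal{H}_6$ under $(b_6)$ coincide and are special cases of $\mathcal{H}_1$ --- so one must ensure that the union still covers, and covers only, the connected graphs of order at least $3$ with $\gamma_{oidR}=5$.
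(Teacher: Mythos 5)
Your proposal is correct, and while items (i), (ii) and the ``if'' directions follow essentially the same two-sided scheme as the paper (exhibit an explicit function of the right weight, then exclude the smaller families), your ``only if'' direction for item (iii) takes a genuinely different route. The paper's converse is a direct case analysis on $|V_{1}|\in\{0,1,2,3\}$, followed by adjacency subcases that read off $\mathcal{H}_{3},\mathcal{H}_{6},\mathcal{H}_{5},\mathcal{H}_{4},\mathcal{H}_{1},\mathcal{H}_{2}$ one at a time, with each sub-condition $(a_{i}),(b_{i}),(c_{i})$ justified ad hoc by an inequality of the form ``otherwise $\gamma_{oidR}(G)\le 4$''. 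You instead isolate the lemma $\gamma_{oidR}(G)\le 3+\beta(G-v)$ for a dominating vertex $v$ and split on whether $G$ has a dominating vertex: in the affirmative case the positive-valued vertices other than $b$ give $\beta(G-b)=2$, a minimum cover $\{a,c\}$ of $G-b$ reconstructs the $\mathcal{H}_{1}/\mathcal{H}_{2}$ shape, and the sub-conditions become exactly the single statement $\beta(G-b)\ge 2$; in the negative case the profiles collapse quickly to $\mathcal{H}_{3},\mathcal{H}_{4},\mathcal{H}_{6}(a_{6})$. This buys a more conceptual explanation of why the families look the way they do, exposes the redundancies ($\mathcal{H}_{5}$ and $\mathcal{H}_{6}$ under $(b_{6})$ sit inside $\mathcal{H}_{1}$) that the paper leaves implicit, and replaces many scattered ``otherwise $\le 4$'' checks by one reusable estimate; the price is that a full write-up must still handle the boundary case in which every dominating vertex of $G$ receives the value $0$ under $f$ (then independence of $V_{0}$ forces $|V_{0}|=1$, hence $n\le 4$, and these few graphs are eliminated by hand), a sub-case your sketch subsumes under ``inspecting the profile'' but does not spell out.
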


\begin{proof}
(i) It is clear.

(ii) Let $G\in \mathcal{G}$. In $G_1$, if we assign the value $3$ to the vertex $v_1$, the value $1$ to the vertex $v_2$ and $0$ to the other vertices, then we have $\gamma_{oidR}(G_1)\le 4$. In $G_2$, if we assign $2$ to the vertices $v_1$ and $v_2$ (or $1$ to one of them and $3$ to the other one), then $\gamma_{oidR}(G_2)\le 4$. In $G_3$, if we assign $2$ to the vertices $v_1$ and $v_2$, then $\gamma_{oidR}(G_3)\le 4$. Since, $G_1$, $G_2$, and $G_3$ are not stars, by item (i), we have the equality.

Conversely, let $G$ be a graph with $\gamma_{oidR}(G)=4$ and let $f=(V_0,V_1,V_2,V_3)$ be a $\gamma_{oidR}(G)$-function. We first note that the case $(|V_{0}|,|V_{1}|,|V_{2}|,|V_{3}|)=(0,2,1,0)$ is possible if and only if $G\cong K_{3}$. So, $G$ is of the form of $G_{2}$. Consequently, we have only two remaining possibilities.

- ($a$) There are two adjacent vertices $v_1$ and $v_2$ in $G$ such that $v_1\in V_3$ and $v_2\in V_1$ or $\{v_1,v_2\}\subseteq V_2$ and the other $k\geq1$ vertices are independent and belong to $V_0$ (note that $k$ must be at least one, for otherwise $G$ would be a star with $\gamma_{oidR}(G)=3$). Therefore, such a graph should be of the form $G_1$ or $G_2$ in $\mathcal{G}$.

- ($b$) There are two non adjacent vertices $v_1$ and $v_2$ in $G$ such that $\{v_1,v_2\}\subseteq V_2$ and the remaining vertices are independent and belong to $V_0$. Therefore, such a graph should be a graph like $G_3$ in $\mathcal{G}$. Note that in such a case we have $k\geq2$, for otherwise $G$ is disconnected or satisfies that $\gamma_{oidR}(G)\le 3$.

(iii) If $G\in \mathcal{H}_{1}\cup \mathcal{H}_{2}$, then $(f(a),f(b),f(c))=(1,3,1)$ and $f(v)=0$ otherwise, is an OIDRD function of $G$ that leads to $\gamma_{oidR}(G)\le 5$. If $G\in \mathcal{H}_{3}$, then $(f(a),f(b))=(3,2)$ and $f(v)=0$ for any other vertex, is an OIDRD function of $G$ that gives $\gamma_{oidR}(G)\le 5$. Also, if $G\in \mathcal{H}_{4}\cup \mathcal{H}_{5}$, then $(f(a),f(b),f(c))=(2,2,1)$ and $f(v)=0$ otherwise, is an OIDRD function of $G$, and so $\gamma_{oidR}(G)\leq5$. Finally, if $G\in \mathcal{H}_{6}$, then $(f(a),f(b),f(c))=(2,1,2)$ and $f(v)=0$ for any other vertex, is a desired OIDRD function of $G$ that gives the same conclusion as above. Since the graphs of the family $\cup_{i=1}^{6}\mathcal{H}_{i}$ neither are stars nor are included in the family $\mathcal{G}$, by items (i) and (ii), we get the desired equalities.

Conversely, we assume that $f:V(G)\rightarrow\{0,1,2,3\}$ is a $\gamma_{oidR}(G)$-function of weight $5$. If $V_{1}=\emptyset$, then there exist two vertices $a$ and $b$ such that $(f(a),f(b))=(3,2)$. Note that $f$ assigns $0$ to the other vertices and that $ab\notin E(G)$, necessarily. In such situation, since $G$ is connected, at least one vertex must be adjacent to $b$ and each such vertex must be adjacent to $a$, as well. Now if $V_{a}=\emptyset$, then $\gamma_{oidR}(G)\le 4$, which is a contradiction. This shows that $G\in \mathcal{H}_{3}$.

We now assume that $V_{1}\neq \emptyset$. Suppose that $|V_{1}|=1$ and $b$ is the only member of $V_{1}$. Therefore, there are two vertices $a$ and $c$ assigned $2$ under $f$. We first consider $b$ is adjacent to both $a$ and $c$. Note that the remaining vertices must be adjacent to both $a$ and $c$, as well. If $V_{a,c}=\emptyset$ and $|V_{a,b,c}|\leq 1$, then we have $\gamma_{oidR}(G)\leq 4$. Thus, $|V_{a,b,c}|\geq 2$. If $V_{a,c}\neq \emptyset$ and $V_{a,b,c}=\emptyset$, then we have $\gamma_{oidR}(G)\le 4$. Hence, $V_{a,b,c}\neq \emptyset$. This shows that $G\in \mathcal{H}_{6}$.

Let $b$ be adjacent to only one vertex in $\{a,c\}$, say $c$. We deal with two possibilities depending on the adjacency between $a$ and $c$. First, let $ac\in E(G)$. Then, the other vertices belong to $V_{a,c}\cup V_{a,b,c}$. If $V_{a,c}=\emptyset$ and $|V_{a,b,c}|\leq1$, then $\gamma_{oidR}(G)\leq4$, and so $|V_{a,b,c}|\geq2$. If $V_{a,c}\neq \emptyset$ and $V_{a,b,c}=\emptyset$, then $\gamma_{oidR}(G)\le 4$. Therefore, $V_{a,b,c}\neq \emptyset$. In such case, $G\in \mathcal{H}_{5}$. Now let $ac\notin E(G)$. Thus, the other vertices belong to $V_{a,c}\cup V_{a,b,c}$. If $V_{a,c}=\emptyset$ and $|V_{a,b,c}|\leq1$, then $G$ is disconnected or $\gamma_{oidR}(G)\le 4$. Therefore, $|V_{a,b,c}|\geq2$. Note that if $V_{a,c}\neq\emptyset$, we have no conditions on the set $V_{a,b,c}$. Consequently, $G\in \mathcal{H}_{4}$.

We now consider a situation in which $|V_{1}|=2$. Let $V_{1}=\{a,c\}$. Then, both $a$ and $c$ must be adjacent to a vertex $b$ assigned $3$ under $f$. Hence, the other vertices belong to $V_{b}\cup V_{a,b}\cup V_{b,c}\cup V_{a,b,c}$. We need to consider two possibilities depending on the adjacency between $a$ and $c$. First, let $ac\notin E(G)$ and assume that $V_{a,b}=V_{b,c}=\emptyset$. If $|V_{a,b,c}|\leq1$, then we have $\gamma_{oidR}(G)\leq4$, and so $|V_{a,b,c}|\geq2$. If only one of the sets $V_{a,b}$ and $V_{b,c}$ is empty, and $V_{a,b,c}=\emptyset$, then $\gamma_{oidR}(G)\le 4$. Thus, $V_{a,b,c}\neq \emptyset$. We now note that if $V_{a,b},V_{b,c}\neq \emptyset$, then we have no conditions on the set $V_{a,b,c}$. This argument guarantees that $G\in \mathcal{H}_{1}$. On the other hand, let $ac\in E(G)$. Hence, we have a cycle $abca$. If at least one of the sets $V_{a,b}$ and $V_{b,c}$ is empty, then we must have $V_{a,b,c}\neq \emptyset$, for otherwise $\gamma_{oidR}(G)\le 4$. If both $V_{a,b}$ and $V_{b,c}$ are nonempty, then we have no conditions on the set $V_{a,b,c}$. Therefore, $G\in \mathcal{H}_{2}$.

Finally, in the case $|V_{1}|=3$ we have $V_{0}=V_{3}=\emptyset$ and only one vertex is assigned $2$ under $f$. In such situation, $G\cong K_{4}\in \mathcal{H}_{2}$. This completes the proof.
\end{proof}


\section{Computational and combinatorial results}

We first consider the problem of deciding whether a graph $G$ has the OIDRD number at most a given integer. That is stated in the following decision problem. Note that Ahangar \emph{et al}. \cite{acss} proved that the problem of computing the OIDRD number of graphs is NP-hard, even when restricted to bipartite graphs and chordal graphs.

$$\begin{tabular}{|l|}
  \hline
  \mbox{OIDRD problem}\\
  \mbox{INSTANCE: A graph $G$ and an integer $k\leq2|V(G)|$.}\\
  \mbox{QUESTION: Is $\gamma_{oidR}(G)\leq k$?}\\
  \hline
\end{tabular}$$

Our aim is to show that the problem is NP-complete for planar graphs with maximum degree at most four. To this end, we make use of the well-known INDEPENDENCE NUMBER PROBLEM (IN problem) which is known to be NP-complete from \cite{gj}.

$$\begin{tabular}{|l|}
  \hline
  \mbox{IN problem}\\
  \mbox{INSTANCE: A graph $G$ and an integer $k\leq|V(G)|$.}\\
  \mbox{QUESTION: Is $\alpha(G)\geq k$?}\\
  \hline
\end{tabular}$$

Moreover, the problem above remains NP-complete even when restricted to some planar graphs. Indeed, we have the following result.

\begin{theorem}\emph{(\cite{gj})}
The IN problem is NP-complete even when restricted to planar graphs of maximum degree at most three.
\end{theorem}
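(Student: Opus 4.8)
The plan is to check membership in NP and then give a polynomial-time reduction from the unrestricted IN problem, which we may take as already NP-complete. Membership is immediate: given $(G,k)$, any vertex subset $I$ with $|I|\ge k$ is a polynomially checkable certificate of a ``yes'' instance. So the whole task is the hardness part, and I would obtain the two extra restrictions --- planarity, and maximum degree at most three --- one after the other, by local gadget replacements that shift the independence number by a known additive constant.

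First I would fix an instance $(G,k)$ of the general IN problem and draw $G$ in the plane so that every edge is a polyline, no three edges meet at a point, and the number $t$ of pairwise crossings is polynomial in $|V(G)|$ (such a drawing is standard). At each crossing I would splice in a fixed planar \emph{crossover gadget} on the four incident edge-ends, engineered so that: (a) the gadget is planar and all of its internal vertices have degree at most three; and (b) there is a constant $c$ such that every maximum independent set of the resulting graph meets the gadget in exactly $c$ internal vertices, and the way it touches the four terminals forces the ``in the independent set / not in the independent set'' status to be transmitted consistently along each of the two crossing routes, the two routes being mutually unconstrained. Carrying this out at all $t$ crossings produces a \emph{planar} graph $G'$ with $\alpha(G')=\alpha(G)+ct$; thus $\alpha(G)\ge k$ if and only if $\alpha(G')\ge k+ct$. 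Designing this gadget, and verifying by inspection of its finitely many independent-set configurations that it behaves as claimed, is the technical core; alternatively one cites the classical crossover construction.

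Next I would eliminate vertices of degree four or more from $G'$ while keeping the graph planar. At a vertex $v$ of degree $d\ge 4$ I would delete $v$ and insert, in a small disk around its former position and respecting the cyclic order of the edges at $v$, a planar gadget on $O(d)$ new vertices of degree at most three (for instance a long enough even cycle, or a chain of triangles, with $d$ designated attachment vertices), joining the $i$-th former edge of $v$ to the $i$-th attachment vertex. One checks that such a gadget contributes a fixed number of independent-set vertices and can be realised either ``fully on'' at every attachment point --- the analogue of $v$ lying in the independent set --- or consistently ``off'' --- the analogue of $v$ lying outside --- so that $\alpha$ again changes by an additive constant depending only on $d$. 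Performing this at every high-degree vertex, and noting that the crossover gadgets of the previous paragraph already have bounded degree (or folding the two replacements into one), yields a planar graph $G''$ of maximum degree at most three with $\alpha(G'')=\alpha(G')+c'$ for a constant $c'$ determined by the degrees of $G'$; this gives a correct, polynomial-time reduction.

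The step I expect to be the real obstacle is the crossover gadget: one needs a planar graph on four terminals whose family of extendable maximum independent sets realises exactly the relation ``route $1$ passes through independently, and route $2$ passes through independently, with no interaction between the two,'' while having constant independence contribution and internal degree already at most three, so that it is compatible with the degree-reduction stage. Everything else --- checking planarity of the splicings, bookkeeping the additive constants, and confirming the polynomial running time --- is routine. As this theorem is quoted from \cite{gj}, it is used in the paper only as a black box; the outline above is how its proof runs.
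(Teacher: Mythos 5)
First, a point of reference: the paper offers no proof of this statement at all --- it is imported verbatim from \cite{gj} (in fact from the Garey--Johnson--Stockmeyer ``simplified NP-complete problems'' line of work, via the complementation between vertex covers and independent sets) --- so there is nothing in the paper to compare your argument against. Your outline does follow the classical route: NP membership is trivial, planarity is bought with a crossover gadget at each of polynomially many crossings, and high degrees are removed by local substitution, each step shifting $\alpha$ by a computable additive amount. Deferring the crossover gadget itself to the classical construction is acceptable for a cited result.

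There is, however, a concrete flaw in your degree-reduction step. You require only that the gadget ``contributes a fixed number of independent-set vertices'' in both its \emph{on} and \emph{off} realisations, and you offer an even cycle with $d$ alternating attachment vertices as an instance. That gadget does not simulate the vertex: the two maximum independent sets of $C_{2d}$ (the two alternating classes) have the \emph{same} size $d$, so the \emph{off} state is always available at no cost and one gets $\alpha(G'')=\alpha(G'-v)+d$, which is not $\alpha(G')+\mathrm{const}$ --- it overshoots by one precisely when every maximum independent set of $G'$ uses $v$ (already visible for $G'=K_2$). What is actually needed is that the \emph{on} state contribute exactly one more vertex than the \emph{off} state, to compensate for deleting $v$; the standard gadget achieving this is the odd path $P_{2d-1}$ with the $d$ former edges attached at its odd positions, whose unique maximum independent set is the set of attachment vertices (size $d$) while the best attachment-free independent set has size $d-1$, giving $\alpha(G'')=\alpha(G')+d-1$ exactly. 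With that replacement (and the observation that the path's internal degrees are at most $3$ and the cyclic attachment preserves planarity), your reduction is correct; as written, the specific gadget you name would make the reduction fail on instances where the high-degree vertex is forced into every maximum independent set.
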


\begin{theorem}\label{planar}
The OIDRD problem is NP-complete even when restricted to planar graphs with maximum degree at most four.
\end{theorem}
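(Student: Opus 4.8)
The plan is to give a polynomial-time reduction from the IN problem restricted to planar graphs of maximum degree at most three, which is NP-complete by the previous theorem. Given such a graph $G$ on $n$ vertices and an integer $k\le n$, I would build $G'$ by attaching to every vertex $v\in V(G)$ a fresh pendant path $v\,x_v\,y_v$ (so $x_v$ has degree $2$ in $G'$ and $y_v$ is a leaf). Attaching pendant paths preserves planarity and raises the degree of each original vertex by exactly one, so $G'$ is planar with $\Delta(G')\le 4$; it has $3n$ vertices and is clearly computable in polynomial time. The heart of the matter is the identity
\[
\gamma_{oidR}(G')=4n-\alpha(G),
\]
after which the reduction is immediate: $\alpha(G)\ge k$ if and only if $\gamma_{oidR}(G')\le 4n-k$, and $4n-k\le 4n\le 2|V(G')|$, so $(G',4n-k)$ is a legitimate instance of the OIDRD problem.

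For the upper bound $\gamma_{oidR}(G')\le 4n-\alpha(G)$ I would start from a maximum independent set $I$ of $G$ and set $f(v)=0,\ f(x_v)=3,\ f(y_v)=0$ for $v\in I$, and $f(v)=1,\ f(x_v)=2,\ f(y_v)=1$ for $v\notin I$. A direct check shows $f$ is an OIDRD function: each $v\in I$ and each $y_v$ with $v\in I$ has the neighbour $x_v\in V_3$; each $v\notin I$ and each $y_v$ with $v\notin I$ has the neighbour $x_v\in V_2$; and $V_0=I\cup\{y_v:v\in I\}$ is independent, since $I$ is independent in $G$ and each $y_v$ has only the neighbour $x_v\notin V_0$. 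Its weight is $3|I|+4(n-|I|)=4n-\alpha(G)$.

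For the lower bound, let $f$ be an arbitrary OIDRD function of $G'$ and, for each $v\in V(G)$, write $w_v=f(v)+f(x_v)+f(y_v)$, so $\omega(f)=\sum_{v}w_v$. Using that $y_v$ is a leaf adjacent only to $x_v$, a short case analysis on $f(y_v)$ gives $w_v\ge 3$ for every $v$. The decisive refinement is that a block of weight exactly $3$ is very constrained: running through all ordered triples summing to $3$ against conditions (a)--(b) and the independence of $V_0$, the only survivors are $(f(v),f(x_v),f(y_v))\in\{(0,3,0),(0,2,1),(0,1,2)\}$ (the last two being admissible only when $v$ is dominated through its neighbours in $V(G)$), and in all of them $f(v)=0$. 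Hence $I:=\{v\in V(G):w_v=3\}\subseteq V_0$, so $I$ is an independent set of $G$ and $|I|\le\alpha(G)$. Finally
\[
4n-\omega(f)=\sum_{v}(4-w_v)\le\sum_{v:\,w_v=3}(4-w_v)=|I|\le\alpha(G),
\]
since every term with $w_v\ge 4$ is non-positive; therefore $\omega(f)\ge 4n-\alpha(G)$, and minimizing over $f$ yields the stated identity.

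To finish, the OIDRD problem lies in NP because a given $f:V(G)\to\{0,1,2,3\}$ can be checked to be an OIDRD function with $\omega(f)\le k$ in polynomial time; combined with the reduction above and the NP-completeness of the planar, maximum-degree-three IN problem, this proves that the OIDRD problem is NP-complete even for planar graphs with maximum degree at most four. The main obstacle — really the only nontrivial point — is the case check that a block of weight $3$ is forced to assign $0$ to its vertex of $G$, since this is exactly what links the "cheap" blocks to an independent set of $G$; everything else is a finite, mechanical verification.
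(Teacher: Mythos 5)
Your proof is correct and follows essentially the same reduction as the paper: starting from the planar, maximum-degree-three IN problem, you attach a three-vertex tree gadget to each vertex of $G$ and establish the identity $\gamma_{oidR}(G')=4n-\alpha(G)$. The only differences are the gadget (you attach a pendant path $v\,x_v\,y_v$ by an endpoint, whereas the paper joins $v_i$ to the centre of a $P_3$, yielding a $4n$-vertex graph rather than your $3n$-vertex one) and your per-block case analysis for the lower bound, which is if anything more carefully justified than the paper's ``without loss of generality'' step; you also explicitly verify membership in NP, which the paper omits.
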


\begin{proof}
Let $G$ be a planar graph with $V(G)=\{v_{1},\dots,v_{n}\}$ and maximum degree $\Delta(G)\leq3$. For any $1\leq i\leq n$, we add a copy of the path $P_{3}$ with central vertex $u_{i}$. We now construct a graph $G'$ by joining $v_{i}$ to $u_{i}$, for each $1\leq i\leq n$. Clearly, $G'$ is a planar graph, $|V(G')|=4n$ and $\Delta(G')\leq4$.

Let $f$ be $\gamma_{oidR}(G')$-function. Since $u_{i}$ is adjacent to two leaves, $f$ must assign a weight of at least three to $u_{i}$ together with the two leaves adjacent to it. So, without loss of generality, we may consider that $f(u_{i})=3$, and that $f$ assigns $0$ to both leaves adjacent to $u_{i}$, for each $1\leq i\leq n$. Since $V_{0}^{f}$ is independent, the number of vertices $v_{i}\in V(G)$ which can be assigned $0$ under $f$ is at most $\alpha(G)$. Furthermore, the other vertices of $V(G)$ are assigned at least $1$ under $f$. Consequently, we obtain that $\gamma_{oidR}(G')\ge 3n+(n-\alpha(G))=4n-\alpha(G)$.

On the other hand, let $I$ be an $\alpha(G)$-set. It is easy to observe that the function
\begin{equation*}
g(v)=\left\{
\begin{array}
[l]{lll}%
3, & \text{if }\ v\in\{u_{1},\cdots,u_{n}\},\\
0, & \text{if }\ $\textit{v}$\ \mbox{is a leaf or \textit{v}$\in I$},\\
1, &  \text{otherwise}.
\end{array}
\right.
\end{equation*}
is an OIDRD function of $G'$ with weight $4n-\alpha(G)$, which leads to the equality $\gamma_{oidR}(G')=4n-\alpha(G)$. Now, by taking $j=4n-k$, it follows that $\gamma_{oidR}(G')\leq j$ if and only if $\alpha(G)\geq k$, which completes the reduction. Since the IN problem is NP-complete for planar graphs of maximum degree at most three, we deduce that the OITRD problem is NP-complete for planar graphs of maximum degree at most four.
\end{proof}

As a consequence of Theorem \ref{planar}, we conclude that the problem of computing the OIDRD number even when restricted to planar graphs with maximum degree at most four in NP-hard. In consequence, it would be desirable to bound the OIDRD number in terms of several different invariants of the graph.

\begin{theorem}
For any graph $G$, $\gamma_{oidR}(G)\le2\gamma_{oiR}(G)$ with equality if and only if $G=\overline{K_n}$.
\end{theorem}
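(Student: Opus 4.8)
The plan is to establish the inequality by ``doubling'' an optimal outer independent Roman dominating function, and then to pin down the equality case from the exact amount of slack this doubling leaves. For the inequality, let $f=(V_0,V_1,V_2)$ be a $\gamma_{oiR}(G)$-function, and define $g\colon V(G)\to\{0,1,2,3\}$ by $g(v)=0$ when $f(v)=0$, $g(v)=2$ when $f(v)=1$, and $g(v)=3$ when $f(v)=2$. The verification that $g$ is an OIDRD function is short: its zero set is exactly $V_0$, which is independent; if $g(v)=0$ then $f(v)=0$, so $v$ has a neighbour $w$ with $f(w)=2$, i.e.\ $g(w)=3$, which settles condition (a); and condition (b) is vacuous since $g$ never takes the value $1$. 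Counting weights yields the key identity
\[
\omega(g)=2|V_1|+3|V_2|=2\,\omega(f)-|V_2|=2\,\gamma_{oiR}(G)-|V_2|,
\]
so in particular $\gamma_{oidR}(G)\le\omega(g)\le 2\,\gamma_{oiR}(G)$.

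For the ``if'' direction, when $G=\overline{K_n}$ there are no edges, so in any DRD function no vertex may receive $0$ or $1$ (either would require a neighbour of value $2$ or $3$); hence $\gamma_{oidR}(\overline{K_n})=2n$. Likewise, in any OIRD function no vertex may receive $0$, so $\gamma_{oiR}(\overline{K_n})=n$, and indeed $2n=2\cdot n$.

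For the ``only if'' direction, suppose $G$ has an edge $uv$. By the identity above it is enough to exhibit a single $\gamma_{oiR}(G)$-function $f$ with $V_2\neq\emptyset$, for then $\gamma_{oidR}(G)\le\omega(g)=2\,\gamma_{oiR}(G)-|V_2|<2\,\gamma_{oiR}(G)$, so equality fails. To obtain such an $f$, take any $\gamma_{oiR}(G)$-function; if its $V_2$ is nonempty we are done, and otherwise it takes values only in $\{0,1\}$, which forces $V_0=\emptyset$ as well (a $0$-vertex needs a neighbour in $V_2$), so it is the all-ones function and $\gamma_{oiR}(G)=n$. In that case the assignment giving $2$ to $u$, $0$ to $v$, and $1$ to every other vertex is an OIRD function of weight $n=\gamma_{oiR}(G)$ whose $V_2$ is nonempty, as needed.

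There is no serious technical obstacle in this argument; the only delicate point is the last step, namely confirming that as soon as $G$ has an edge, some minimum OIRD function genuinely uses the value $2$. This is what forces the small dichotomy according to whether $\gamma_{oiR}(G)$ equals $n$ or is strictly smaller — in the latter case the claim is immediate, since any OIRD function of weight below $n$ must assign $0$ to some vertex and hence $2$ to one of its neighbours.
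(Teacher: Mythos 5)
Your proof is correct and follows essentially the same route as the paper: the identical doubling of a minimum OIRD function ($1\mapsto 2$, $2\mapsto 3$) gives the inequality with slack $|V_2|$, and the equality case in both arguments comes down to the observation that $V_2=\emptyset$ forces the all-ones OIRD function, after which an edge $uv$ permits a cheaper reassignment. The only cosmetic difference is that you perturb the OIRD function around $uv$ (values $2,0,1,\dots$) and then invoke the slack identity, whereas the paper perturbs the doubled OIDRD function directly (values $3,0,2,\dots$).
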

\begin{proof}
If $f=(V_0,V_1,V_2)$ is a $\gamma_{oiR}(G)$-function, it is easy to observe that $g=(V^{g}_0=V_0,V^{g}_1=\emptyset,V^{g}_2=V_1,V^{g}_3=V_2)$ is an OIDRD function of $G$. Therefore,
\begin{equation}\label{EQ1}
\gamma_{oidR}(G)\leq2|V_1|+3|V_2|\leq2|V_1|+4|V_2|=2\gamma_{oiR}(G).
\end{equation}

Clearly, $\gamma_{oidR}(\overline{K_n})=2\gamma_{oiR}(\overline{K_n})=2n$. We now let $\gamma_{oidR}(G)=2\gamma_{oiR}(G)$. This equality along with the inequality chain (\ref{EQ1}) imply that $V_2=\emptyset$, and since $f$ is an OIRD function of $G$, $V_0=V^{g}_0=\emptyset$ as well. Therefore, all vertices of $G$ are assigned $2$ under $g$. Now if there exists an edge $uv$ in $G$, then the function $g'$ assigning $3$ to $u$, $0$ to $v$, and $2$ to the other vertices is an OIDRD function of $G$ with weight less than $\omega(g)$, which is a contradiction. Therefore, $G=\overline{K_n}$.\end{proof}

As an immediate consequence of the equation (\ref{EQ1}), we have the following result.

\begin{corollary}\label{cor2} If $G$ is a connected graph and $f=(V_0,V_1,V_2)$ is a $\gamma_{oiR}(G)$-function, then $\gamma_{oidR}(G)\le2\gamma_{oiR}(G)-|V_2|$.
\end{corollary}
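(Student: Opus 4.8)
The plan is to read the statement straight off the inequality chain (\ref{EQ1}) established in the proof of the preceding theorem, so that essentially no new argument is required. First I would recall the construction used there: given the $\gamma_{oiR}(G)$-function $f=(V_0,V_1,V_2)$, the function $g=(V^g_0=V_0,\,V^g_1=\emptyset,\,V^g_2=V_1,\,V^g_3=V_2)$ is an OIDRD function of $G$. Indeed, the set of vertices assigned $0$ is unchanged, hence still independent; any $v$ with $g(v)=0$ either had a neighbor in $V_2$ under $f$ (now a neighbor in $V^g_3$) or two neighbors in... actually, more simply, every $v\in V_0$ had a neighbor $w\in V_2=V^g_3$, so condition (a) holds; and condition (b) is vacuous since $V^g_1=\emptyset$. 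Its weight is $\omega(g)=2|V_1|+3|V_2|$, which yields $\gamma_{oidR}(G)\le 2|V_1|+3|V_2|$.

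Next I would simply rewrite the right-hand side in terms of $\gamma_{oiR}(G)$. Since $f$ has weight $\gamma_{oiR}(G)$, we have $\gamma_{oiR}(G)=|V_1|+2|V_2|$, and therefore $2\gamma_{oiR}(G)-|V_2|=2|V_1|+4|V_2|-|V_2|=2|V_1|+3|V_2|$. Combining this with the bound from the previous paragraph gives $\gamma_{oidR}(G)\le 2\gamma_{oiR}(G)-|V_2|$, which is exactly the claim.

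The only thing needing attention is the elementary bookkeeping that the middle term $2|V_1|+3|V_2|$ of the chain (\ref{EQ1}) coincides with $2\gamma_{oiR}(G)-|V_2|$; there is no genuine obstacle, since the corollary is precisely the first inequality of (\ref{EQ1}) with its last term replaced by that equal quantity. I would keep the write-up to two or three lines, pointing back to (\ref{EQ1}) rather than repeating the verification that $g$ is an OIDRD function.
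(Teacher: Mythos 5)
Your proposal is correct and matches the paper exactly: the paper derives this corollary as an immediate consequence of the chain (\ref{EQ1}), namely $\gamma_{oidR}(G)\le 2|V_1|+3|V_2|=2\gamma_{oiR}(G)-|V_2|$, which is precisely your argument. No further comment is needed.
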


\begin{figure}[h]
\tikzstyle{every node}=[circle, draw, fill=white!, inner sep=0pt,minimum width=.16cm]
\begin{center}
\begin{tikzpicture}[thick,scale=.6]
  \draw(0,0) { 

+(-6,0) node{}

    +(-4,0) node{}

    +(-6,0) -- +(-4,0)
    +(-7,1) node{}
    +(-7.4,.6) node{}
    +(-7.4,-.6) node{}
    +(-6,0) -- +(-7,1)
    +(-6,0) -- +(-7.4,.6)
    +(-6,0) -- +(-7.4,-0.6)

+(-7.4,.2) node[rectangle, draw=white!0, fill=white!100]{ ${\textbf{.}}$}
+(-7.4,0) node[rectangle, draw=white!0, fill=white!100]{ ${\textbf{.}}$}
+(-7.4,-.2) node[rectangle, draw=white!0, fill=white!100]{ ${\textbf{.}}$}

+(-2,0) node{}
+(-4,-2) node{}
+(-4,2) node{}
+(-2,0) node{}

+(-4,2) -- +(-4,0)
+(-2,0) -- +(-4,0)
+(-4,-2) -- +(-4,0)

+(-1,1) node{}
    +(-0.6,.6) node{}
    +(-0.6,-.6) node{}
     +(-2,0) -- +(-1,1)
    +(-2,0) -- +(-0.6,.6)
    +(-2,0) -- +(-0.6,-0.6)

    +(-2,-2) node[rectangle, draw=white!0, fill=white!100]{ ${\textbf{.}}$}
+(-2.2,-2.2) node[rectangle, draw=white!0, fill=white!100]{ ${\textbf{.}}$}
+(-1.8,-1.8) node[rectangle, draw=white!0, fill=white!100]{ ${\textbf{.}}$}

 +(-2,2) node[rectangle, draw=white!0, fill=white!100]{ ${\textbf{.}}$}
+(-2.2,2.2) node[rectangle, draw=white!0, fill=white!100]{ ${\textbf{.}}$}
+(-1.8,1.8) node[rectangle, draw=white!0, fill=white!100]{ ${\textbf{.}}$}

+(-6,2) node[rectangle, draw=white!0, fill=white!100]{ ${\textbf{.}}$}
+(-5.8,2.2) node[rectangle, draw=white!0, fill=white!100]{ ${\textbf{.}}$}
+(-6.2,1.8) node[rectangle, draw=white!0, fill=white!100]{ ${\textbf{.}}$}

+(-6,-2) node[rectangle, draw=white!0, fill=white!100]{ ${\textbf{.}}$}
+(-5.8,-2.2) node[rectangle, draw=white!0, fill=white!100]{ ${\textbf{.}}$}
+(-6.2,-1.8) node[rectangle, draw=white!0, fill=white!100]{ ${\textbf{.}}$}

+(-3,-3) node{}
    +(-3.4,-3.4) node{}
    +(-4.6,-3.4) node{}
     +(-4,-2) -- +(-3,-3)
    +(-4,-2) -- +(-3.4,-3.4)
    +(-4,-2) -- +(-4.6,-3.4)

    +(-3.8,-3.4) node[rectangle, draw=white!0, fill=white!100]{ ${\textbf{.}}$}
+(-4.0,-3.4) node[rectangle, draw=white!0, fill=white!100]{ ${\textbf{.}}$}
+(-4.2,-3.4) node[rectangle, draw=white!0, fill=white!100]{ ${\textbf{.}}$}

+(-3,3) node{}
    +(-3.4,3.4) node{}
    +(-4.6,3.4) node{}
     +(-4,2) -- +(-3,3)
    +(-4,2) -- +(-3.4,3.4)
    +(-4,2) -- +(-4.6,3.4)

    +(-3.8,3.4) node[rectangle, draw=white!0, fill=white!100]{${\textbf{.}}$}
+(-4.0,3.4) node[rectangle, draw=white!0, fill=white!100]{ ${\textbf{.}}$}
+(-4.2,3.4) node[rectangle, draw=white!0, fill=white!100]{ ${\textbf{.}}$}

 +(-3.8,3.4) node[rectangle, draw=white!0, fill=white!100]{${\textbf{.}}$}
+(-4.0,3.4) node[rectangle, draw=white!0, fill=white!100]{ ${\textbf{.}}$}
+(-4.2,3.4) node[rectangle, draw=white!0, fill=white!100]{ ${\textbf{.}}$}

+(-0.6,.2) node[rectangle, draw=white!0, fill=white!100]{ ${\textbf{.}}$}
+(-0.6,0) node[rectangle, draw=white!0, fill=white!100]{ ${\textbf{.}}$}
+(-0.6,-.2) node[rectangle, draw=white!0, fill=white!100]{ ${\textbf{.}}$}


 };
\end{tikzpicture}
\end{center}
\caption{The family of graphs $\mathcal{G'}$.} \label{fig:graph-g'}
\end{figure}
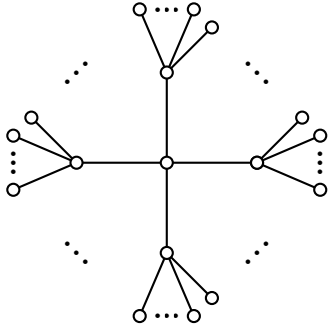

For the equality in the upper bound given in Corollary \ref{cor2}, consider the family of stars, bistars and the family of graphs $\mathcal{G'}$ depicted in Figure \ref{fig:graph-g'}.

\begin{proposition}\label{prop2}
For every graph $G$, $\gamma_{oiR}(G)< \gamma_{oidR}(G)$.
\end{proposition}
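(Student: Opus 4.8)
The plan is to take a $\gamma_{oidR}(G)$-function $g=(V_0,V_1,V_2,V_3)$ and manufacture from it an OIRD function $f$ with $\omega(f)<\omega(g)$; since then $\gamma_{oiR}(G)\le\omega(f)<\omega(g)=\gamma_{oidR}(G)$, this settles the proposition. (We may assume $V(G)\neq\emptyset$.) The natural case split is on whether $V_3$ is empty.

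If $V_3\neq\emptyset$, I would set $f(v)=2$ whenever $g(v)\in\{2,3\}$ and $f(v)=g(v)$ whenever $g(v)\in\{0,1\}$. The verification that $f$ is an OIRD function is routine: $V_0^{f}=V_0^{g}$ is independent, every $v\in V_0^{f}$ had by the DRD property a neighbour in $V_3^{g}$ or two neighbours in $V_2^{g}$ and hence now has a neighbour with $f$-value $2$, and vertices with value $1$ carry no Roman-domination constraint. Here $\omega(f)=\omega(g)-|V_3|<\omega(g)$, so this case is immediate.

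The only delicate case is $V_3=\emptyset$, where the construction above does not decrease the weight and a fresh idea is needed. First I would observe that $V_2\neq\emptyset$: otherwise $g$ uses only $0$ and $1$, but then no vertex can be assigned $0$ (it would need a neighbour in $V_2\cup V_3$) nor $1$, forcing $V(G)=\emptyset$. Then pick any $w\in V_2$ and let $f$ agree with $g$ except that $f(w)=1$. The crucial point — and the one step I expect to require care — is that since $V_3=\emptyset$, every vertex of $V_0^{g}$ has \emph{at least two} neighbours in $V_2^{g}$, so demoting the single vertex $w$ still leaves each such vertex with a neighbour of value $2$ under $f$; combined with $V_0^{f}=V_0^{g}$ being independent, this shows $f$ is an OIRD function, and $\omega(f)=\omega(g)-1<\omega(g)$.

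In short, the argument is a short case analysis on $V_3$; the main obstacle is simply recognizing that the absence of value $3$ upgrades the ``one neighbour in $V_3$'' escape clause into ``two neighbours in $V_2$'', which is exactly what makes a single demotion in $V_2$ safe. No serious difficulty is anticipated beyond getting this observation and the OIRD checks right.
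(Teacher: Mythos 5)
Your proposal is correct and follows essentially the same two-case argument as the paper: when $V_3\neq\emptyset$ collapse the labels $2$ and $3$ into $2$, and when $V_3=\emptyset$ note that $V_2$ is nonempty and that every vertex of $V_0$ has two neighbours in $V_2$, so one vertex of $V_2$ can be demoted to $1$. The only cosmetic difference is that you justify $V_2\neq\emptyset$ directly from the labelling conditions while the paper invokes the fact that $V_2\cup V_3$ is a dominating set; both are valid.
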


\begin{proof} Let $f=(V_0, V_1,V_2,V_3)$ be any $\gamma_{oidR}(G)$-function. If $V_3\ne \emptyset$, then $g=(V^{g}_0=V_0, V^{g}_1=V_1,V^{g}_2=V_2\cup V_3)$ is an OIRD function of $G$, that is, $\gamma_{oiR}(G)< \gamma_{oidR}(G)$. Hence, assume that $V_3 =\emptyset$. Since $V_2 \cup  V_3$ dominates $G$, it follows that $V_2\ne \emptyset$. Thus, all vertices are assigned either the values $0$, $1$ or $2$, and all vertices in $V_0$ must have at least two neighbors in $V_2$ and all vertices in $V_1$ must have at least one neighbor in $V_2$. In such a case, at least one vertex in $V_2$ can be reassigned the value $1$ and the resulting function will be an OIRD function of $G$, as well. Therefore, $\gamma_{oiR}(G)< \gamma_{oidR}(G)$.
\end{proof}

\begin{corollary}\label{cor3}
For any nontrivial connected graph $G$, $\gamma_{oiR}(G)<\gamma_{oidR}(G)<2\gamma_{oiR}(G)$.
\end{corollary}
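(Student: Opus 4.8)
The plan is to read off both inequalities from results already established earlier in the excerpt, so the argument is essentially a packaging step. For the left-hand inequality $\gamma_{oiR}(G)<\gamma_{oidR}(G)$ I would simply invoke Proposition~\ref{prop2}, which is stated for \emph{every} graph $G$; in particular it applies to a nontrivial connected $G$, and nothing further is required.

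For the right-hand inequality $\gamma_{oidR}(G)<2\gamma_{oiR}(G)$ I would combine the preceding theorem, which gives $\gamma_{oidR}(G)\le 2\gamma_{oiR}(G)$ with equality if and only if $G=\overline{K_n}$, with the elementary observation that $\overline{K_n}$ has exactly $n$ components, so it is connected only when $n=1$, and $\overline{K_1}=K_1$ is the trivial graph. Hence a nontrivial connected graph $G$ can never be an edgeless graph $\overline{K_n}$, the equality case of that theorem is excluded, and the inequality becomes strict.

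I do not expect any real obstacle here: the only point deserving a line of care is checking that the hypothesis ``nontrivial and connected'' is precisely what rules out the extremal graphs of the earlier theorem, which follows immediately from the component count of $\overline{K_n}$. Putting the two strict inequalities together yields $\gamma_{oiR}(G)<\gamma_{oidR}(G)<2\gamma_{oiR}(G)$, as claimed.
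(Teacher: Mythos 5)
Your proposal is correct and follows exactly the route the paper intends: the left inequality is Proposition~\ref{prop2} applied directly, and the right one comes from the preceding theorem's equality characterization $G=\overline{K_n}$, which a nontrivial connected graph (having at least one edge) can never satisfy. Nothing further is needed.
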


\begin{theorem}\label{Realize}
For any connected graph $G$ of order $n\geq2$ with maximum degree $\Delta$,
$$max\{\gamma(G),\frac{2}{\Delta}\alpha(G)\}+\beta(G)\leq \gamma_{oidR}(G)\leq3\beta(G).$$
These bounds are sharp.
\end{theorem}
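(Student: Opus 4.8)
The plan is to establish the two inequalities independently and then to exhibit graphs realizing each.

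For the upper bound $\gamma_{oidR}(G)\le 3\beta(G)$, I would take a minimum vertex cover $Q$ of $G$ and set $f(x)=3$ for $x\in Q$ and $f(x)=0$ otherwise. Since $G$ is connected of order at least two it has no isolated vertex, so every $x\notin Q$ has a neighbour, and that neighbour must lie in $Q$ because $V(G)\setminus Q$ is independent; hence each vertex of $V_0^f$ has a neighbour assigned $3$, condition (b) is vacuous since $V_1^f=\emptyset$, and $V_0^f=V(G)\setminus Q$ is independent. Thus $f$ is an OIDRD function of weight $3|Q|=3\beta(G)$.

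For the lower bound, fix a $\gamma_{oidR}(G)$-function $f=(V_0,V_1,V_2,V_3)$ and record two facts. First, $V(G)\setminus V_0=V_1\cup V_2\cup V_3$ is a vertex cover (its complement is independent), so $|V_1|+|V_2|+|V_3|\ge\beta(G)$. Second, $V_2\cup V_3$ is a dominating set (by conditions (a) and (b)), so $|V_2|+|V_3|\ge\gamma(G)$. Writing $\omega(f)=(|V_1|+|V_2|+|V_3|)+(|V_2|+2|V_3|)$ and using $|V_2|+2|V_3|\ge|V_2|+|V_3|\ge\gamma(G)$ gives $\gamma_{oidR}(G)\ge\gamma(G)+\beta(G)$. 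For the other term I would prove the counting inequality $|V_2|+2|V_3|\ge\tfrac{2}{\Delta}|V_0|$: partition $V_0$ into the set $A$ of vertices having a neighbour in $V_3$ and $B=V_0\setminus A$ (whose vertices, by condition (a), each have at least two neighbours in $V_2$); charging each vertex of $A$ to one of its $V_3$-neighbours gives $|A|\le\Delta|V_3|$, while double counting the edges between $B$ and $V_2$ gives $2|B|\le\Delta|V_2|$, and adding proves the claim. Then $\omega(f)=(n-|V_0|)+(|V_2|+2|V_3|)\ge n-(1-\tfrac{2}{\Delta})|V_0|$; since a connected graph on $n\ge3$ vertices satisfies $\Delta\ge2$, the coefficient $1-\tfrac{2}{\Delta}$ is nonnegative, so $|V_0|$ may be replaced by its upper bound $\alpha(G)$, and the Gallai identity $\alpha(G)+\beta(G)=n$ turns this into $\gamma_{oidR}(G)\ge\tfrac{2}{\Delta}\alpha(G)+\beta(G)$. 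The degenerate case $n=2$, i.e.\ $G=K_2$, is checked by hand.

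The main obstacle is the counting step $|V_2|+2|V_3|\ge\tfrac{2}{\Delta}|V_0|$: one must be careful that each vertex of $B$ contributes two to the edge count $e(B,V_2)$, that every vertex of $V_2\cup V_3$ is incident with at most $\Delta$ edges, and that the split $V_0=A\cup B$ is exhaustive precisely because of condition (a). For sharpness, the stars $K_{1,n}$ attain both bounds simultaneously: $\beta=1$ gives $3\beta=3=\gamma_{oidR}(K_{1,n})$ by Observation~\ref{ob1}, while $\gamma=1$, $\alpha=n$, $\Delta=n$ give $\max\{\gamma,\tfrac{2}{\Delta}\alpha\}+\beta=\max\{1,2\}+1=3$; the even cycles $C_{2k}$, with $\gamma_{oidR}(C_{2k})=2k=k+k=\max\{\lceil 2k/3\rceil,k\}+k$, form a second family meeting the lower bound.
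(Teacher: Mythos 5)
Your proposal is correct and follows essentially the same route as the paper: the same all-$3$ assignment on a minimum vertex cover for the upper bound, the same observation that $V_1\cup V_2\cup V_3$ is a vertex cover and $V_2\cup V_3$ a dominating set for the $\gamma(G)+\beta(G)$ part, and the same partition of $V_0$ into vertices with a $V_3$-neighbour and the rest (the paper's $S$ and $S'$) yielding $2|V_0|\leq\Delta(|V_2|+2|V_3|)$ for the $\frac{2}{\Delta}\alpha(G)+\beta(G)$ part. Only the sharpness examples differ (the paper uses coronas $G'\odot\overline{K_r}$ and a tailored family built from $K_{2,m_i}$'s in addition to the star), which is immaterial.
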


\begin{proof}
Let $I$ be an $\alpha(G)$-set. Hence, the function $f:V(G)\rightarrow\{0,1,2,3\}$ for which $f(v)=0$ if $v\in I$, and $f(v)=3$ for any other vertex, defines an OIDRD function of $G$. Therefore, $\gamma_{oidR}(G)\leq \omega(f)=3(n-\alpha(G))$. Since $\alpha(G)+\beta(G)=n$ (the well known Gallai theorem \cite{G}), the upper bound follows.

That the upper bound is sharp can be seen by the corona $G'\odot \overline{K_{r}}$ for $r\geq2$, in which $G'$ is an arbitrary (connected) graph. Here, $f(v')=3$ for each $v'\in V(G')$, and $f(v)=0$ for all vertices $v$ of the copies of $\overline{K_{r}}$ leads to an OIDRD function of minimum weight in $G$ equals to $3\beta(G)$.

On the other hand, let $g$ be a $\gamma_{oidR}(G)$-function. The set $V_{0}$ is independent and $V_{2}\cup V_{3}$ is a dominating set in $G$, by the properties of an OIDRD function of $G$. Moreover, we have $\omega(g)=|V_{1}|+2|V_{2}|+3|V_{3}|$. These lead to
$$\alpha(G)\geq|V_{0}|=n-(|V_{1}|+|V_{2}|+|V_{3}|)=n-\omega(g)+|V_{2}|+2|V_{3}|\geq n-\omega(g)+|V_{2}|+|V_{3}|\geq n-\omega(g)+\gamma(G).$$
Therefore,
\begin{equation}\label{EQ10}
\gamma_{oidR}(G)=\omega(g)\geq \gamma(G)+\beta(G).
\end{equation}

The lower bound is obvious for $\Delta=1$. So, we assume that $\Delta\geq2$. Now, let $f=(V_{0},V_{1},V_{2},V_{3})$ be a $\gamma_{oidR}(G)$-function. Let $S=V_{0}\cap N(V_{3})$ and $S'=V_{0}\setminus S$. Since each vertex in $V_{3}$ has at most $\Delta$ neighbors in $S$, we have $|S|\leq \Delta|V_{3}|$. Moreover, every vertex in $S'$ has at least two neighbors in $V_{0}$ and every vertex in $V_{0}$ has at most $\Delta$ neighbors in $S'$. Therefore, $2|S'|\leq \Delta|V_{2}|$. The last two inequalities show that $2|V_{0}|=2|S|+2|S'|\leq(|V_{2}|+2|V_{3}|)\Delta$. Taking into account this inequality and since $V_{0}$ is independent, we have
\begin{equation*}
\begin{array}{lcl}
\Delta \gamma_{oidR}(G)=\Delta(|V_{1}|+2|V_{2}|+3|V_{3}|)&=&\Delta(|V_{1}|+|V_{2}|+|V_{3}|)+\Delta(|V_{2}|+2|V_{3}|)\\
&\geq& \Delta(n-|V_{0}|)+2|V_{0}|\geq \Delta n-(\Delta-2)\alpha(G).
\end{array}
\end{equation*}
This implies the lower bound $\gamma_{oidR}(G)\geq n-(\Delta-2)\alpha(G)/\Delta$. Using the equality $\alpha(G)+\beta(G)=n$ again, we have
\begin{equation}\label{EQ11}
\gamma_{oidR}(G)\geq \frac{2}{\Delta}\alpha(G)+\beta(G).
\end{equation}
The desired lower bound now follows from (\ref{EQ10}) and (\ref{EQ11}).

That the lower bound (\ref{EQ10}) is sharp can be seen as follows. Given a positive integer $t$ and $1\leq i\leq t$, let $H_{i}$ be a graph obtained from the complete bipartite graph $K_{2,m_{i}}$ ($m_{i}\geq2$) by adding a new vertex $z_{i}$ and joining it the two vertices, say $x_{i}$ and $y_{i}$, of the smallest partite set of $K_{2,m_{i}}$. We now form a cycle on the set of vertices $\{z_{1},\cdots,z_{t}\}$, and denote the obtained graph by $H$. It is easily observed that $h:V(H)\rightarrow\{0,1,2,3\}$  defined by $f(x_{i})=f(y_{i})=2$ and $f(z_{2i-1})=1$ for all $1\leq i\leq \lceil t/2\rceil$, and $f(v)=0$ for any other vertex, is an OIDRD function of $H$ with minimum weight $4t+\lceil t/2\rceil$. On the other hand, $\beta(H)=3t-\lfloor t/2\rfloor$ and $\gamma(H)=2t$. Therefore, the lower bound (\ref{EQ10}) holds with equality for $H$. Moreover, the lower bound (\ref{EQ11}) is sharp for the star $K_{1,n-1}$. This completes the proof.
\end{proof}

Note that the upper bound given in the theorem above was also given in \cite{acss}. For the sake of completeness, we pointed it out and gave an infinite family of graphs for which the equality holds.


\section{Trees}

The authors of \cite{acss} proved that $\beta(G)+2$ is a lower bound on the OIDRD number of a nontrivial connected graph $G$. This lower bound can be improved for trees. Recall that a {\em double star $S_{a,b}$} is a tree with exactly two non-leaf vertices in which one support vertex is adjacent to $a$ leaves and the other to $b$ leaves.

\begin{theorem}\label{induction}
For any tree $T$, $\gamma_{oidR}(T)\geq2\beta(T)+1$ and this bound is tight.
\end{theorem}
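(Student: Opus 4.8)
The plan is to argue by induction on the order $n=|V(T)|$; it is convenient to prove the slightly more general statement that $\gamma_{oidR}(F)\ge 2\beta(F)+1$ for every forest $F$ having at least one edge, because the reductions below may disconnect the graph and produce isolated vertices (each of which carries weight at least $2$, so contributes harmlessly). The base cases are the trees with $n\le 3$: by Observation \ref{ob1}(i) and Proposition \ref{prop1}(i) one has $\gamma_{oidR}(K_2)=\gamma_{oidR}(P_3)=3=2\cdot 1+1$, and $\gamma_{oidR}(K_1)=2\ge 1$. For the inductive step assume $T$ is not a star and $n\ge 4$, fix a longest path $x_0x_1\cdots x_d$ (so $d\ge 3$), and recall that then every neighbour of $x_1$ other than $x_2$ is a leaf. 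Fix a $\gamma_{oidR}(T)$-function $f=(V_0,V_1,V_2,V_3)$ that is extremal with respect to the usual pendant exchange rules: any support vertex carrying at least two leaves is assigned $3$ and all its leaves assigned $0$, and no leaf having a neighbour in $V_3$ is positive.

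If $\deg(x_1)\ge 3$, then $f(x_1)=3$ and all leaves at $x_1$ lie in $V_0$. Delete all but one of them to obtain a tree $T^{\star}$; the restriction of $f$ is still an OIDRD function of $T^{\star}$ (the surviving leaf is dominated by $x_1$ and $V_0$ stays independent), so $\gamma_{oidR}(T^{\star})\le\gamma_{oidR}(T)$. Since $x_1$ remains a support vertex, a minimum vertex cover of either graph may be taken to contain $x_1$ and avoid its leaves, whence $\beta(T^{\star})=\beta(T)$. The induction hypothesis applied to $T^{\star}$ gives $\gamma_{oidR}(T)\ge\gamma_{oidR}(T^{\star})\ge 2\beta(T^{\star})+1=2\beta(T)+1$.

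Now suppose $\deg(x_1)=2$, so $x_1$ has the single leaf $x_0$ and the neighbour $x_2$; one always has $f(x_0)+f(x_1)\ge 2$, with equality only for $(f(x_0),f(x_1))=(2,0)$, which forces $f(x_2)\ge 2$. The common scheme is to delete a pendant segment $R$ at the end of the path, possibly raising the value on the unique vertex of $T-R$ adjacent to $R$, to obtain an OIDRD function of $T-R$ of weight at most $\gamma_{oidR}(T)-2k$, where $k=\beta(T)-\beta(T-R)$; induction then closes the gap. If $f(x_1)\le 1$ (hence $f(x_0)\ge 2$, as $x_0$'s only neighbour is $x_1$), or if $f(x_1)\ge 2$ and deleting $x_1$ does not destroy the double Roman condition at $x_2$, take $R=\{x_0,x_1\}$: then $k=1$, the weight drops by at least $2$, and $x_2$ is fine either because $x_1\notin V_2\cup V_3$, or by assumption, or—when $f(x_1)=2$ and $x_2$ loses a supporter—after reassigning $x_2$ the value $1$ using a surviving neighbour in $V_2$, which costs only $1$. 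The one residual case is $f(x_1)=3$, $f(x_0)=0$, $f(x_2)=0$, with $x_1$ the only neighbour of $x_2$ in $V_2\cup V_3$; then every other neighbour of $x_2$ lies in $V_1$, in particular $f(x_3)=1$, and a pendant branch at $x_2$ distinct from $x_0x_1$ and $x_3$ is a single edge of total $f$-weight at least $3$ whose removal decreases $\beta$ by exactly $1$. So we may assume $\deg(x_2)=2$ and finish with $R=\{x_0,x_1,x_2,x_3\}$: the weight drops by $0+3+0+1=4$, and $\beta$ drops by exactly $2$ because $x_0x_1$ and $x_2x_3$ are disjoint edges while $\{x_1,x_3\}$ covers the whole segment together with its attaching edge.

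The step I expect to be the real work is precisely this last case: one must simultaneously control how much weight is removed and how much of the vertex cover number is lost, and in the sub-cases $\deg(x_2)\ge 3$ or $\deg(x_3)\ge 3$ keep track of the isolated vertices that the deletion may create—this is exactly why running the induction over forests is helpful, and why the extremality assumptions on $f$ are needed to force enough weight onto the deleted branches. Sharpness is immediate from Observation \ref{ob1}(i): the even path $P_{2k}$ satisfies $\beta(P_{2k})=k$ and $\gamma_{oidR}(P_{2k})=2k+1$.
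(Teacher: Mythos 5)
Your proof is correct in substance and follows the same overall strategy as the paper's: induction on the order, normalisation of a $\gamma_{oidR}$-function at strong support vertices, work at the far end of a diametral path, deletion of a pendant segment $R$, and comparison of the weight removed against $2(\beta(T)-\beta(T-R))$. The differences are organisational but real. The paper always deletes rooted subtrees ($T_w$, and in one sub-subcase also $T_{y''}$), so the remainder stays connected, and it disposes of the hard configuration through an extended case analysis on $\deg(x)$ and on the values $f(x)$ and $f(y)$; you instead run the induction over forests, which legitimises the possibly disconnecting deletion $R=\{x_0,x_1,x_2,x_3\}$, and you isolate a single genuinely obstinate configuration ($f(x_1)=3$, $f(x_0)=f(x_2)=0$, with $x_1$ the unique $V_2\cup V_3$-neighbour of $x_2$) after first stripping the side branches at $x_2$, which your normalisation correctly forces to be pendant $P_2$'s of weight at least $3$. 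This is arguably tidier than the paper's Subcases 2.2.1--2.2.4. Two points in your enumeration do need patching, although both succumb to the same ``increase $f(x_2)$ by one'' repair you already invoke for $f(x_1)=2$: the sub-case $f(x_1)=3$, $f(x_2)=1$ with $x_1$ the sole $V_2\cup V_3$-neighbour of $x_2$ (raise $x_2$ to $2$; the segment weight $3$ minus the repair cost $1$ still meets $2k=2$), and the sub-case $f(x_1)=3$, $f(x_2)=0$ where $x_2$ has exactly one further $V_2$-neighbour and no further $V_3$-neighbour (raise $x_2$ to $1$, supported by that surviving neighbour). Neither is the residual case you state, and neither is covered by the clause you restrict to $f(x_1)=2$; relatedly, ``reassigning $x_2$ the value $1$'' should read ``increasing $f(x_2)$ by $1$'' so that the possibility $f(x_2)=1$ is handled. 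Finally, your forest-valued induction should also admit edgeless remainders (e.g.\ $T=P_5$ leaves $T-R=\{x_4\}$), which is harmless since an isolated vertex must receive weight at least $2$. With these small repairs the argument is complete, and the sharpness witness $P_{2k}$ agrees with the paper's.
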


\begin{proof}
We proceed by induction on the order $n$ of $T$. The result is obvious when $n=1$. Moreover, it is easily observed that $\gamma_{oidR}(K_{1,n})=2\beta(K_{1,n})+1=3$. Hence, we may assume that $T$ has diameter $diam(T)\geq3$. If $diam(T)=3$, then $T$ is isomorphic to the double star $S_{a,b}$, $1\leq a\leq b$. We then have $\gamma_{oidR}(S_{1,b})=2\beta(S_{1,b})+1=5$, and $\gamma_{oidR}(S_{a,b})=6>5=2\beta(S_{a,b})+1$ when $a\geq2$. Thus, in what follows we consider that $diam(T)\geq4$, which implies that $n\geq5$.

Suppose that $\gamma_{oidR}(T')\geq2\beta(T')+1$, for each tree $T'$ of order $1\leq n'<n$. Let $T$ be a tree of order $n$. We consider two cases depending on the behavior of support vertices of $T$.

\textit{Case 1.} $T$ has a strong support vertex $u$. Let $v$ be a leaf adjacent to $u$. Consider the tree $T'=T-v$. Note that every $\gamma_{oidR}(T)$-function $f$ assigns $3$ to $u$ and $0$ to the leaves adjacent to $u$, necessarily. It is easy to see that $\beta(T')=\beta(T)$ and that $\gamma_{oidR}(T')\leq \gamma_{oidR}(T)$. Therefore, $\gamma_{oidR}(T)\geq 2\beta(T)+1$ by the induction hypothesis.

\textit{Case 2.} All support vertices of $T$ are weak. Let $r$ and $v$ be two leaves with $d(r,v)=diam(T)$. We
root the tree $T$ at $r$. Let $w$ be the parent of $v$, and $x$ be the parent of $w$. Since $T$ has no strong support vertices, it follows that $w$ has degree $deg(w)=2$. We need to consider two subcases depending on $deg(x)$.

\textit{Subcase 2.1.} $deg(x)\geq3$. Since $d(r,v)=diam(T)$, all children of $x$ are leaves or support vertices. Let $T'=T-T_{w}$ (for a vertex $u$, by $T_{u}$ we mean the subtree of $T$ rooted at $u$ consisting of $u$ and all its descendants in $T$). It is easily observed that $\beta(T)=\beta(T')+1$. Let $f$ be a $\gamma_{oidR}(T)$-function of $T$. If $f(x)\geq2$, then $f(w)+f(v)=2$. Therefore, the restriction of $f$ to $V(T')$, from now on denoted $f'=f\mid_{V(T')}$, is an OIDRD function of $T'$. So, $\gamma_{oidR}(T')\leq w(f')=\gamma_{oidR}(T)-2$. Therefore, $2\beta(T)+1=2\beta(T')+3\leq \gamma_{oidR}(T')+2\leq\gamma_{oidR}(T)$.

Suppose that $f(x)=0$. We may assume, without loss of generality, that $f(v)=0$ and $f(w)=3$. If $x$ is the parent of a support vertex $w'$ different from $w$, then we may assume that $f$ assigns $3$ to $w'$ and $0$ to the leaf adjacent to $w'$. In such a case, $f'=f\mid_{V(T')}$ is an OIDRD function of $T'$ with weight $\omega(f')=\gamma_{oidR}(T)-3$. So, $2\beta(T)+1<\gamma_{oidR}(T)$ by a similar fashion. We now assume that all children of $x$ different from $w$ are leaves. Since $T$ has no strong support vertices, it follows that $x$ is adjacent to only one leaf $x'$. If $f(x')=3$, then $f'$ is an OIDRD function of $T'$ and we are done. So, we may assume that $f(x')=2$. In such a situation, the assignment $(g(x'),g(x),g(w),g(v))=(0,3,0,2)$ and $g(u)=f(u)$ for the other vertices is a $\gamma_{oidR}(T)$-function of $T$. Moreover, $g'=g\mid_{V(T')}$ is an OIDRD function of $T'$ with weight $\omega(g')=\gamma_{oidR}(T)-2$. Hence, we have $2\beta(T)+1\leq \gamma_{oidR}(T)$, by a similar fashion.

Let $f(x)=1$. Since $f(w)+f(v)=3$, we assume that $f(w)=3$ and $f(v)=0$. Suppose that $x$ is adjacent to a leaf $x'$ which is unique since $T$ has no strong support vertices. Then $f(x')\ge 2$, necessarily. Now the assignment $(g(x'),g(x),g(w),g(v))=(0,3,0,2)$ and $g(u)=f(u)$ for the remaining vertices, is an OIDRD function of $T$ with weight less than $\omega(f)$, which is impossible. Therefore, all children of $x$ are support vertices. Let $w'\neq w$ be a child of $x$ adjacent to the leaf $w''$. Since $f(w')+f(w'')=3$, we assume that $f(w')=3$ and $f(w'')=0$. In such a situation, the assignment $(g(w''),g(w'),g(x),g(w),g(v))=(2,0,2,0,2)$ and $g(u)=f(u)$ otherwise, defines an OIDRD function of $T$ with weight less than $\omega(f)$, a contradiction. 

\textit{Subcase 2.2.} $deg(x)=2$. Again, we let $T'=T-T_{w}$. Suppose that $y$ is the parent of $x$. If $f(x)\in\{2,3\}$, then $f(w)=0$ and $f(v)=2$. Therefore, $f'=f\mid_{V(T')}$ is an OIDRD function of $T'$. This shows that $2\beta(T)+1=2\beta(T')+3\leq \gamma_{oidR}(T')+2\leq \omega(f')+2=\gamma_{oidR}(T)$.

If $f(x)=1$, then $f(w)+f(v)=3$. So, we assume that $f(w)=3$ and $f(v)=0$. In such a case, $(g(v),g(w),g(x))=(2,0,2)$ and $g(u)=f(u)$ for the remaining vertices, is a $\gamma_{oidR}(T)$-function. Now, $g'=g\mid_{V(T')}$ is an OIDRD function of $T'$. Therefore, $2\beta(T)+1=2\beta(T')+3\leq \gamma_{oidR}(T')+2\leq \omega(g')+2=\gamma_{oidR}(T)$.

We now suppose that $f(x)=0$. Again, we can assume that $f(w)=3$ and $f(v)=0$. If $f(y)=3$, then $f'=f\mid_{V(T')}$ is an OIDRD function of $T'$ with weight $\gamma_{oidR}(T)-3$. This implies that $2\beta(T)+1<\gamma_{oidR}(T)$. If $f(y)=2$, then $g'(y)=3$ and $g'(u)=f(u)$ for any other vertex $u\in V(T')$, is an OIDRD function of $T'$ with weight $\omega(g')=\gamma_{oidR}(T)-2$. In such a case, we deduce that $2\beta(T)+1\leq \gamma_{oidR}(T)$.

Therefore, in what follows we assume that $f(y)=1$. Note that by our choice of $v$, the vertex $y$ satisfies at least one of the following conditions: $(a)$ $deg(y)=2$; $(b)$ $y$ is adjacent to a (unique) leaf; $(c)$ $y$ has a child which is a support vertex ; or $(d)$ $y$ has a child which is the parent of a support vertex. Hence, we need to consider four possibilities depending on the behavior of $y$.

\textit{Subcase 2.2.1.} Let $y$ be adjacent to a (unique) leaf $y'$. Hence, $f(y')=2$, and so, the assignment $(g'(y'),g'(y))=(0,3)$ and $g'(u)=f(u)$ for any other vertex $u\in V(T')$, is an OIDRD function of $T'$ with weight $\omega(g')=\gamma_{oidR}(T)-3$. Therefore, $2\beta(T)+1=2\beta(T')+3\leq \gamma_{oidR}(T')+2\leq \omega(g')+2<\gamma_{oidR}(T)$.

\textit{Subcase 2.2.2.} Let $y$ have a child $y'$ which is a support vertex, and let $y''$ be the unique leaf adjacent to $y'$. Hence, we can assume that $f(y')=3$ and $f(y'')=0$. We then conclude that $(g'(y''),g'(y'),g'(y))=(2,0,3)$ and $g'(u)=f(u)$ for the remaining vertices $u\in V(T')$, is an OIDRD function of $T'$ with weight $\omega(g')=\gamma_{oidR}(T)-2$. We consequently deduce that $2\beta(T)+1=2\beta(T')+3\leq \gamma_{oidR}(T')+2\leq \omega(g')+2=\gamma_{oidR}(T)$.

\textit{Subcase 2.2.3.} Let $y$ have a child $y'$ which is adjacent to a support vertex $y''$, and let $y'''$ be the unique leaf adjacent to $y''$. Then, $3\leq f(y')+f(y'')+f(y''')\leq4$. Suppose first that $f(y')+f(y'')+f(y''')=4$. We may assume that $f(y')=f(y''')=2$ and $f(y'')=0$. Then, the assignment $(g'(y'''),g'(y''),g'(y'),g'(y))=(0,3,0,3)$ and $g'(u)=f(u)$ for any other vertex $u\in V(T')$, is an OIDRD function of $T'$ with weight $\omega(g')=\gamma_{oidR}(T)-2$, and so we obtain $2\beta(T)+1\leq \gamma_{oidR}(T)$ similarly to Subcase 2.2.2.

If $f(y')+f(y'')+f(y''')=3$, then we have $f(y''')=f(y')=0$ and $f(y'')=3$, necessarily. In such a situation, we consider the subtree $T''=T-T_{w}-T_{y''}$. It is easy to see that $\beta(T)=\beta(T'')+2$. On the other hand, the assignment $g'(y)=3$ and $g'(u)=f(u)$ for the other vertices $u\in V(T'')$ is an OIDRD function of $T''$ with weight $\omega(g')=\gamma_{oidR}(T)-4$. Therefore, $2\beta(T)+1=2\beta(T'')+5\leq \gamma_{oidR}(T'')+4\leq \omega(g')+4=\gamma_{oidR}(T)$.

\textit{Subcase 2.2.4.} We now consider the situation in which $deg(y)=2$. Since $diam(T)\geq4$, the vertex $y$ has a parent $z$. Moreover, we must have $f(z)\geq2$. We observe that the assignment $g'(x)=2$, $g'(y)=0$ and $g'(u)=f(u)$ for any remaining vertex $u\in V(T')$, is an OIDRD function of $T'$ with weight $\omega(g')=\gamma_{oidR}(T)-2$, and we deduce that $2\beta(T)+1\leq \gamma_{oidR}(T)$ by a similar fashion.

This completes the proof of the lower bound. To see the tightness of it, we consider the path graphs of even order, since $\gamma_{oidR}(P_{2t})=2t+1=2\beta(P_{2t})+1$ (by using Observation \ref{ob1} (i)).
\end{proof}


\section{Corona graphs}

Let $G$ and $H$ be graphs where $V(G) = \{v_1, \ldots ,v_{n}\}$. We recall that the corona $G\odot H$ of graphs $G$ and $H$ is obtained from the disjoint union of $G$ and $n$ disjoint copies of $H$, say $H_1,\ldots, H_{n}$, such that for all $i\in \{1,\dots,n\}$, the vertex $v_i\in V(G)$ is adjacent to every vertex of $H_i$. We next present an exact formula for $\gamma_{oidR}(G\odot H)$ when $\Delta(H)\leq|V(H)|-2$.

\begin{theorem}
Let $G$ be a graph of order $n$, and let $H$ be a graph of maximum degree at most its order minus two. Then $\gamma_{oidR}(G\odot H)$ equals
$$\min\{|V_0|(n(H)+\gamma(H))+|V_1|(\gamma_{oidR}(H)+1)+|V_2|(\gamma_{oiR}(H)+2)+|V_3|(\beta(H)+3)\},$$
taken over all possible functions $f_G=(V_0,V_1,V_2,V_3)$ over $V(G)$ for which the vertices labeled with $0$ form an independent set.
\end{theorem}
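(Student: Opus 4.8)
The plan is to establish the formula by two inequalities, matching a lower bound (every OIDRD function of $G\odot H$ costs at least the stated minimum) with an upper bound (the minimum is achievable). The central observation is that an OIDRD function $f$ on $G\odot H$ decomposes: it restricts to some function $f_G$ on the copy of $G$ inside $G\odot H$, and to functions $f_i$ on each copy $H_i$, and the defining conditions of an OIDRD function link $f_G(v_i)$ to what $f_i$ must accomplish on $H_i$. Since $V_0^f$ must be independent and each $v_i$ is adjacent to all of $H_i$, if $v_i\in V_0$ then $V(H_i)\cap V_0^f=\emptyset$; consequently the label of $v_i$ forces a particular ``domination-type'' requirement on $f_i$, and the minimum cost of $f_i$ subject to that requirement is exactly one of the four coefficients $n(H)+\gamma(H)$, $\gamma_{oidR}(H)+1$, $\gamma_{oiR}(H)+2$, $\beta(H)+3$ (the $+1,+2,+3$ accounting for $f_G(v_i)$ itself). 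So the whole problem reduces to computing, for each of the four possible values of $f_G(v_i)$, the cheapest way to label $H_i$.

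First I would carry out this per-copy analysis carefully. If $f_G(v_i)=0$: then no vertex of $H_i$ can be $0$ (independence), so $f_i$ is a DRD function of $H_i$ with $V_0^{f_i}=\emptyset$, and additionally $v_i$ itself (having label $0$) needs either a neighbor labeled $3$ in $H_i$ or two neighbors labeled $2$ — but any vertex of $H_i$ labeled $\geq 1$ that dominates the rest of $H_i$ gives what $v_i$ needs if we put a $3$ somewhere; the cheapest such assignment has weight $n(H)+\gamma(H)$ (put $3$ on a minimum dominating set... actually put a $1$ on all of $V(H)$ except a $\gamma(H)$-set which gets bumped — here is where I would be careful to check the arithmetic gives precisely $n(H)+\gamma(H)$, using $\Delta(H)\le n(H)-2$ to ensure $\gamma(H)\ge\ldots$, or rather to ensure no vertex of $H$ dominates all of $H_i$ so that a single $3$ does not suffice alone... this hypothesis is exactly what makes the formula clean). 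If $f_G(v_i)=1$: then $v_i$ needs a neighbor in $V_2\cup V_3$, i.e. $H_i$ must contain a vertex labeled $\ge 2$; moreover vertices of $H_i$ labeled $0$ are dominated (via $v_i$'s label $1$? no — a $0$ vertex needs a $3$-neighbor or two $2$-neighbors, and $v_i$ has label $1$ so $v_i$ does not help) — so $f_i$ together with the help of $v_i$ (which only helps label-$1$ vertices of $H_i$) behaves like an OIDRD function of $H_i$ but where $1$-vertices of $H_i$ may be ``rescued'' by $v_i$; the minimum is $\gamma_{oidR}(H)+1$, with the convention about whether $v_i$'s presence lets us save — I expect it works out to exactly $\gamma_{oidR}(H)$ on $H_i$ plus the $1$ on $v_i$, since outer-independence of the $0$-set of $H_i$ is forced and the $1$-vertices of $H_i$ still need a $\ge 2$ neighbor inside $H_i$ unless... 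I would double-check this. If $f_G(v_i)=2$: then $v_i$ can serve as one of the two ``$2$-neighbors'' for $0$-vertices of $H_i$; so a vertex of $H_i$ labeled $0$ needs just one more neighbor in $V_2$ or a neighbor in $V_3$, all inside $H_i$ — this is exactly an OIRD-type condition on $H_i$, giving minimum $\gamma_{oiR}(H)+2$. If $f_G(v_i)=3$: then $v_i$ alone satisfies every $0$-vertex of $H_i$, so the only remaining constraint is outer-independence, i.e. $V_0^{f_i}$ independent and everything else $\ge 1$; the cheapest is to put $1$ on a maximum independent set's complement — weight $\beta(H)+3$.

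Having pinned down the four coefficients, the lower bound follows by summing over $i$: $\omega(f)=\sum_i\bigl(f_G(v_i)+\omega(f_i)\bigr)\ge\sum_i c(f_G(v_i))$ where $c(0)=n(H)+\gamma(H)$, etc., and $V_0^{f_G}$ is independent because $V_0^f$ is; hence $\omega(f)$ is at least the stated minimum. The upper bound: take the optimal $f_G$ achieving the minimum, and on each $H_i$ install the cheapest admissible labeling determined above; one checks directly that the resulting $f$ on $G\odot H$ is a legitimate OIDRD function — in particular its $0$-set is independent (inside each $H_i$ by construction, across $G$ because $V_0^{f_G}$ is independent, and no edge joins a $0$-vertex of $G$ to a $0$-vertex of its copy since $v_i\in V_0$ forces $H_i$ to be $0$-free) — with weight exactly the minimum.

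The main obstacle, and the step I would spend the most care on, is the per-copy optimization for the cases $f_G(v_i)\in\{0,1\}$: I must verify that the minimum weight of the constrained labeling of $H_i$ is \emph{exactly} $n(H)+\gamma(H)$ (resp. $\gamma_{oidR}(H)+1$) and not smaller, and this is precisely where the hypothesis $\Delta(H)\le n(H)-2$ is used — it guarantees that no single vertex of $H$ dominates the whole of $H_i$, so that a $3$ on one vertex of $H_i$ plus $0$ elsewhere is never admissible, forcing the ``$\ge 1$ almost everywhere'' structure. I would also need the parallel facts that in the $f_G(v_i)=2$ case the induced condition on $H_i$ is genuinely equivalent to being an OIRD function (so that $\gamma_{oiR}(H)$ appears) and that in no case can a cheaper ``mixed'' strategy beat the four candidates. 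Establishing these equivalences cleanly — essentially four small lemmas about constrained domination-type functions on $H$ — is the technical heart of the argument; once they are in place, the assembly into the global lower and upper bounds is routine.
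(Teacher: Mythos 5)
Your proposal is correct and follows essentially the same route as the paper: a per-copy analysis of the four possible labels of $v_i$ yielding the four coefficients, assembled into matching lower and upper bounds, with the hypothesis $\Delta(H)\le n(H)-2$ (equivalently $\gamma(H)\ge 2$) used exactly where you place it. The one step you flag for double-checking, the case $f_G(v_i)=1$, resolves even more simply than you suggest: a neighbor labeled $1$ contributes nothing to any vertex's requirement, so the restriction of the function to $H_i$ must be a genuine OIDRD function of $H_i$ and the coefficient $\gamma_{oidR}(H)+1$ is exact.
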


\begin{proof}
Consider a function $f_G=(V_0,V_1,V_2,V_3)$ over $V(G)$ such that the vertices labeled with $0$ form an independent set. We next describe a function $f:V(G\odot H)\rightarrow\{0,1,2,3\}$ defined in the following way. Let $v_i\in V(G)=\{v_{1},\cdots,v_{n}\}$.
\begin{itemize}
  \item If $f_G(v_i)=0$, then we take a $\gamma(H)$-set $D$, and for every vertex $w\in V(H_i)$ we make $f(w)=2$ if $w\in D$, and $f(w)=1$ otherwise.
  \item If $f_G(v_i)=1$, then we choose a $\gamma_{oidR}(H)$-function $f_H$ and for every vertex $w\in V(H_i)$ we make $f(w)=f_H(w)$.
  \item If $f_G(v_i)=2$, then we choose a $\gamma_{oiR}(H)$-function $g_H$ and for every vertex $w\in V(H_i)$ we make $f(w)=g_H(w)$.
  \item If $f_G(v_i)=3$, then we take an $\alpha(H)$-set $S$, and for every vertex $w\in V(H_i)$ we make $f(w)=0$ if $w\in S$, and $f(w)=1$ otherwise.
  \item For every vertex $v_i\in V(G)$, we make $f(v_i)=f_G(v_i)$.
\end{itemize}
We shall now prove that such function $f$ is an OIDRD function of $G\odot H$. We consider several situations for a given $i\in \{1,\dots,n\}$.

\begin{itemize}
  \item $f_G(v_i)=0$. Since $H$ has maximum degree at most its order minus two, the $\gamma(H)$-set $D$ has at least two vertices. Thus, $v_i$ has at least two neighbors labeled with $2$. Moreover, every vertex $w\in V(H_i)$ such that $f(w)=1$ has a neighbor labeled with $2$ since $D$ is a dominating set of $H$.
  \item $f_G(v_i)=1$. Since $f_H$ is a $\gamma_{oidR}(H)$-function, every vertex of $V(H_i)$ satisfies the condition for $f$ to be an OIDRD function in $G\odot H$. Among other things, this also means that there is at least one vertex in $V(H)$ labeled with $2$ or $3$ under $f_H$. So, the vertex $v_i$ is adjacent to at least one vertex with label $2$ or $3$.
  \item $f_G(v_i)=2$. Note that any vertex of $V(H_i)$, labeled with $0$ under $g_H$, is adjacent to a vertex labeled with $2$ in $V(H_i)$. Also, since every vertex of $V(H_i)$ is adjacent to $v_i\in V(G)$, and $f(v_i)=2$, it follows that every vertex labeled with $0$ is adjacent to at least two vertices labeled with $2$, as well as, every vertex labeled with $1$ is adjacent to at least one vertex labeled with $2$.
  \item $f_G(v_i)=3$. Since every vertex of $V(H_i)$ is adjacent to $v_i$, it clearly follows that every vertex of $V(H_i)$ satisfies the condition for $f$ to be an OIDRD function of $G\odot H$.
\end{itemize}
As a consequence of all the situations described above, we deduce that $f$ is an OIDRD function of $G\odot H$. Since this has been made for an arbitrary function $f_G=(V_0,V_1,V_2,V_3)$ over $V(G)$ such that the vertices labeled with $0$ form an independent set, it is in particular satisfied for that function which gives the minimum weight. Furthermore, $\alpha(H)+\beta(H)=n(H)$. Therefore, $\gamma_{oidR}(G\odot H)\le \min\{|V_0|(n(H)+\gamma(H))+|V_1|(\gamma_{oidR}(H)+1)+|V_2|(\gamma_{oiR}(H)+2)+|V_3|(\beta(H)+3)\}$.

On the other hand, consider a $\gamma_{oidR}(G\odot H)$-function $g=(V'_0,V'_1,V'_2,V'_3)$ and let $v_i\in V(G)$. We analyze now some cases.\\

\noindent
\emph{Case 1:} $g(v_i)=0$. Hence, for every vertex $u\in V(H_i)$ it follows, $g(u)\ge 1$. Moreover, there must be at least one vertex $w\in V(H_i)$, such that $g(w)\ge 2$, since every vertex labeled $1$ under $g$ must be adjacent to a vertex labeled with $2$ or $3$ under $g$. Thus, it follows that $(V'_2\cup V'_3)\cap V(H_i)$ is a dominating set of $H_i$, and so, $g(V(H_i)\cup\{v_i\})\ge 2|(V'_2\cup V'_3)\cap V(H_i)|+|V'_1\cap V(H_i)|\ge 2\gamma(H)+n(H)-\gamma(H)=n(H)+\gamma(H)$.\\

\noindent
\emph{Case 2:} $g(v_i)=1$. In such a situation, it can be readily seen that the restriction of $g$ over $H_i$ must be an OIDRD function of $H_i$. Thus, $g(V(H_i)\cup\{v_i\})\ge \gamma_{oidR}(H)+1$.\\

\noindent
\emph{Case 3:} $g(v_i)=2$. Since every vertex of $V(H_i)$ is adjacent to $v_i$, the condition for a vertex $u\in V(H_i)$ (labeled with $0$) requiring to have two adjacent vertices labeled with $2$ (if it is the case), implies that at least one of such neighbors must be in $V(H_i)$. Also, note that if there exists a vertex $w\in V(H_i)$ such that $g(w)=3$, then we can redefine $g(w)$ as $g(w)=2$ (maintaining all the remaining labels the same), and we obtain an OIDRD function of $G\odot H$ with smaller weight, which is not possible. Thus, every vertex of $V(H_i)$ has label at most $2$. Consequently, the restriction of $g$ over $H_i$ must be an OIRD function of $H_i$. Therefore, $g(V(H_i)\cup\{v_i\})\ge \gamma_{oiR}(H)+2$.\\

\noindent
\emph{Case 4:} $g(v_i)=3$. Now, we can easily observe that for every vertex $w\in V(H_i)$, it must happen $g(w)\le 1$. Since $V(H_i)\cap V'_0$ is an independent set, we obtain that $g(V(H_i)\cup\{v_i\})\ge n(H)-\alpha(H)+3=\beta(H)+3$.\\

Since $V'_0$ is an independent set, it is clear that the function $f'_G=(V''_0,V''_1,V''_2,V''_3)=(V'_0\cap V(G),V'_1\cap V(G),V'_2\cap V(G),V'_3\cap V(G))$ satisfies that $V''_0=V'_0\cap V(G)$ is independent. As a consequence of all the cases above, by making the sum $\sum_{i=1}^n g(V(H_i)\cup\{v_i\})$, we deduce that 
\begin{align*}
  \gamma_{oidR}(G\odot H) & \ge |V''_0|(n(H)+\gamma(H))+|V''_1|(\gamma_{oidR}(H)+1)+|V''_2|(\gamma_{oiR}(H)+2)+|V''_3|(\beta(H)+3)\\
   & \ge \min\{|V_0|(n(H)+\gamma(H))+|V_1|(\gamma_{oidR}(H)+1)\\
   &\hspace*{0.0cm}+|V_2|(\gamma_{oiR}(H)+2)+|V_3|(\beta(H)+3)\},
\end{align*}
taken over all possible functions $f_G=(V_0,V_1,V_2,V_3)$ over $V(G)$ for which the vertices labeled with $0$ form an independent set. This completes the proof.
\end{proof}

\end{document}